\documentclass[12pt]{article}
\usepackage{amsfonts}
\usepackage{amssymb}
\usepackage{mathrsfs}
\usepackage{srcltx}
\textwidth 155mm \textheight 225mm \oddsidemargin 15pt
\evensidemargin 0pt \topmargin 0cm \headsep 0.3cm

\usepackage{amsmath}
\usepackage{amsthm}
\usepackage{amstext}
\usepackage{amsopn}
\usepackage{graphicx}

\newtheorem{theorem}{Theorem}[section]
\newtheorem{lemma}[theorem]{Lemma}

\theoremstyle{definition}

\newtheorem{example}[theorem]{Example}

\theoremstyle{remark}
\newtheorem{remark}[theorem]{Remark}

\numberwithin{equation}{section}

\newcommand{\ba}{\begin{array}}
\newcommand{\ea}{\end{array}}
\newcommand{\f}{\frac}

\newcommand{\la}{\lambda}

\newcommand{\N}{{\mathbf N}}
\newcommand{\ds}{\displaystyle}

\begin{document}
\date{}
\title{ \bf\large{Spatial nonhomogeneous periodic solutions induced by nonlocal prey competition in a diffusive predator-prey model}\footnote{This research is supported by the National Natural Science Foundation of China (Nos. 11771109 and 11371111)}}
 \author{Shanshan Chen\footnote{Corresponding Author, Email: chenss@hit.edu.cn},\ \  Junjie Wei\footnote{Email: weijj@hit.edu.cn},\ \ Kaiqi Yang
 \\
{\small  Department of Mathematics,  Harbin Institute of Technology,\hfill{\ }}\\
 {\small Weihai, Shandong, 264209, P.R.China.\hfill{\ }}}
\maketitle

\begin{abstract}
{The diffusive Holling-Tanner predator-prey model with no-flux boundary conditions and nonlocal prey competition is considered in this paper. We show the existence of spatial nonhomogeneous periodic solutions, which is induced by nonlocal prey competition. In particular,
the constant positive steady state can lose the stability through Hopf bifurcation when the given parameter passes through some critical values, and the bifurcating periodic solutions near such values can be spatially nonhomogeneous and orbitally asymptotically stable. This phenomenon is different from that in models without nonlocal effect. }

\noindent {\bf{Keywords}}: Predator-prey model; Nonlocal competition; Hopf bifurcation; Spatial nonhomogeneous periodic solutions.
\end{abstract}

\section {Introduction}
During the past twenty years, bifurcations and spatiotemporal patterns for homogeneous reaction-diffusion equations have been studied extensively, see
\cite{Dongyy2017,Guo2017,LiWang,LiaoWang,PengYi,ShiR,SongZou,WangJF,WangSW,Yi,ZhouShi} and references therein.
In particular, spatially homogeneous and nonhomogeneous periodic orbits can occur through Hopf bifurcation. To our knowledge, for homogeneous reaction-diffusion equations,
the constant positive steady state can lose the stability when the given parameter passes through some Hopf bifurcation values, and the bifurcating periodic solutions near such values are always spatially homogeneous. The spatially nonhomogeneous periodic orbits can also occur through Hopf bifurcation, but they are always unstable. This phenomenon was firstly obtained by Yi et al. \cite{Yi} for
the following diffusive predator-prey model with Holling type-II predator functional response,
\begin{equation}\label{Yi}
\begin{cases}
  \ds\frac{\partial u}{\partial t}-d_1\Delta u=u\left(1-\ds\f{u}{k}\right)-\ds\frac{muv}{1+u}, & x\in \Omega,\; t>0,\\
 \ds\frac{\partial v}{\partial t}-d_2\Delta v=-\theta v+\ds\frac{muv}{1+u}, & x\in\Omega,\; t>0,\\
 \partial_\nu  u=\partial_\nu
  v=0,& x\in \partial \Omega,\;
 t>0.\\
\end{cases}
\end{equation}
Due to the instability, it is hard to obtain spatially nonhomogeneous periodic orbits numerically for homogeneous reaction-diffusion equations.

It has been pointed out that there is no real justification for assuming that
the interaction between individuals of a species is local, and the individuals at different locations may compete for common resource or communicate either visually or by chemical means \cite{DuHsu2010,Furter}. Models with nonlocal competition effect have been studied extensively, see \cite{Berestycki,Billingham,fang2011monotone,Gregory,gourley2000travelling,Hamel} for results on traveling wave solutions and
\cite{Alves,Corr,Furter,Sun,Yamada} for existence and bifurcations of steady states.
Recently, considering nonlocal competition of prey, Merchant and Nagata \cite{Merchant} proposed the following nonlocal Rosenzweig-MacArthur predator-prey model,
\begin{equation}\label{Yi-nonlocal}
\begin{cases}
  \ds\frac{\partial u}{\partial t}=d_1\Delta u+au\left(1-\ds\f{1}{k}\int_{\Omega} K(x,y)u(y,t)dy\right)-\ds\frac{buv}{u+m}, & x\in \Omega,\; t>0,\\
 \ds\frac{\partial v}{\partial t}=d_2\Delta v-dv+\ds\frac{cuv}{u+m}, & x\in\Omega,\; t>0,\\
\end{cases}
\end{equation}
and nonlocal Holling-Tanner predator-prey model,
\begin{equation}\label{Tanner-nonlocal}
\begin{cases}
  \ds\frac{\partial u}{\partial t}=d_1\Delta u+au\left(1-\ds\f{1}{k}\int_{\Omega} K(x,y)u(y,t)dy\right)-\ds\frac{buv}{u+m}, & x\in \Omega,\; t>0,\\
 \ds\frac{\partial v}{\partial t}=d_2\Delta v+cv\left(1-\ds\frac{ev}{u}\right), & x\in\Omega,\; t>0.\\
\end{cases}
\end{equation}
When $\Omega=(-\infty,\infty)$,
they showed that the nonlocal competition can induce complex spatiotemporal patterns.
For one-dimensional bounded domain $(0,\ell\pi)$, Chen and Yu \cite{ChenYuDCDS} chose $K(x,y)=1/\ell\pi$ as in \cite{Furter} and obtained that the constant positive steady state of model \eqref{Yi-nonlocal} can also lose the stability when the given parameter passes through some Hopf bifurcation values, but the bifurcating periodic solutions near such values can be spatially nonhomogeneous. This phenomenon is different from that in model \eqref{Yi} without nonlocal effect. However, the properties of Hopf bifurcation, such as the bifurcation direction and stability of the bifurcating periodic solutions, have not been solved theoretically.

In this paper, we mainly consider model \eqref{Tanner-nonlocal} with nonlocal competition of prey, and show the existence and properties of Hopf bifurcation.
We remark that if $K(x,y)=\delta(x-y)$, then model \eqref{Tanner-nonlocal} is reduced to the classical Holling-Tanner predator-prey model, for which the steady states, Hopf bifurcations and Turing instability were investigated in \cite{LiJiang,MaLi}, and the global stability of the positive constant equilibrium was considered in \cite{chen2012,peng2007,Qi2016}.
As in \cite{ChenYuDCDS,Furter}, we choose $\Omega=(0,\ell\pi)$, $K(x,y)=1/\ell\pi$, and then model \eqref{Tanner-nonlocal} with no-flux boundary conditions and nonnegative initial values takes the following form:
\begin{equation}\label{nonlocal2}
\begin{cases}
  \ds\frac{\partial u}{\partial t}=d_1 u_{xx}+au\left(1-\ds\f{1}{k\ell\pi}\int_{0}^{\ell\pi} u(y,t)dy\right)-\ds\frac{buv}{u+m}, & x\in (0,\ell\pi),\; t>0,\\
 \ds\frac{\partial v}{\partial t}=d_2 v_{xx}+cv\left(1-\ds\frac{ev}{u}\right), & x\in (0,\ell\pi),\; t>0,\\
 u_x(0,t)=
  v_x(0,t)=0,\;u_x(\ell\pi,t)=
  v_x(\ell\pi,t)=0,&
 t>0,\\
 u(x,0)=u_0(x)>0,\;\;v(x,0)=v_0(x)>0,
\end{cases}
\end{equation}
where $u(x,t)$ and $v(x,t)$ stand for the densities
of the prey and predator at time $t$ and location
$x$ respectively, and parameters $a$, $b$, $c$, $e$, $k$, $\ell$, $d_1$ and $d_2$ are all positive constants. Specifically, $\ell$ is the spatial scale; $d_1$ and $d_2$ are the diffusion rates of the prey and predator respectively; $k$ represents the carrying capacity of the prey; $b$ and $e$ measure the
interaction strength between the predator and prey; $a$ and $c$ are the intrinsic growth rates of the prey and predator respectively; and $m$ measures the prey's ability to evade attack, see \cite{tanner1975} for more detailed biological explanation. By using the following rescaling,
$$\tilde t=at,\;\;\tilde u=\ds\f{u}{m},\;\;\tilde v=\ds\f{ev}{m},$$
denoting $$\tilde \beta=\ds\f{ m}{k},\;\;\tilde b=\ds\f{b}{ae},\;\;\tilde c=\ds\f{c}{a},\;\;\tilde d_1=\ds\f{d_1}{a},\;\;\tilde d_2=\ds\f{d_2}{a},$$
and dropping the tilde sign,
system \eqref{nonlocal2} can be simplified as follows:
\begin{equation}\label{nonlocal3}
\begin{cases}
  \ds\frac{\partial u}{\partial t}=d_1 u_{xx}+u\left(1-\ds\f{\beta}{\ell\pi}\int_{0}^{\ell\pi} u(y,t)dy\right)-\ds\frac{buv}{u+1}, & x\in (0,\ell\pi),\; t>0,\\
 \ds\frac{\partial v}{\partial t}=d_2 v_{xx}+cv\left(1-\ds\frac{v}{u}\right), & x\in (0,\ell\pi),\; t>0,\\
  u_x(0,t)=
  v_x(0,t)=0,\;u_x(\ell\pi,t)=
  v_x(\ell\pi,t)=0,&
 t>0,\\
 u(x,0)=u_0(x)>0,\;\;v(x,0)=v_0(x)>0.
\end{cases}
\end{equation}
Here parameters $d_1$, $d_2$, $\beta$, $b$, $c$ and $\ell$ are all positive.
Denote
\begin{equation}\label{x}
X:=\{(u,v)^T:u,v\in H^2(0,\ell\pi),\;u_x|_{x=0,\ell\pi}=v_x|_{x=0,\ell\pi}=0\},
\end{equation}
and we can adopt the framework of \cite{hassard1981theory} to investigate the Hopf bifurcation of
model \eqref{nonlocal3} by using $b$ as a bifurcation parameter. Actually, the bifurcation parameter we choose is equivalent to parameter $b$.

The rest of the paper is organized as follows. In Section 2, we show the existence of Hopf bifurcation for model \eqref{nonlocal3}. In Section 3, we investigate the stability and direction of bifurcating periodic solutions via the center
manifold theorem and normal form theory\cite{hassard1981theory}. Finally, some numerical
simulations and spatially nonhomogeneous patterns are presented in Section 4. Throughout the paper, we denote by $\mathbb{N}$ the set of all positive integers, and $\mathbb{N}_0=\mathbb{N}\cup\{0\}$.

\section{Stability and Hopf bifurcation}
In this section, we study the stability of the constant positive equilibrium and associated Hopf bifurcation for model \eqref{nonlocal3}. Obviously, model
\eqref{nonlocal3} always has a unique constant positive equilibrium, denoted by $(\la,\la)$, where $\la\in(0,1/\beta)$
satisfies $(1-\beta\la)(1+\la)=b\la$. Therefore, $\la$
is equivalent to parameter $b$ and strictly decreasing with respect to $b$.
Linearizing system \eqref{nonlocal3} at positive equilibrium $(\la,\la)$, we obtain
\begin{equation}\label{5.2line}
\begin{cases}
  \ds\frac{\partial w}{\partial t}=d_1 w_{xx}-\ds\f{\beta\la}{\ell\pi}\int_0^{\ell \pi}w(y,t)dy+\ds\f{\la(1-\beta\la)}{1+\la}w-(1-\beta\la) z, & x\in (0,\ell\pi),\; t>0,\\
 \ds\frac{\partial z}{\partial t}-d_2 z_{xx}=cw-cz,& x\in (0,\ell\pi),\;t>0,\\
 w_x(0,t)=
  z_x(0,t)=0,\;w_x(\ell\pi,t)=
  z_x(\ell\pi,t)=0,&
 t>0.\\
\end{cases}
\end{equation}
By a direct computation, we obtain the sequence of the characteristic equations with respect to $(\la,\la)$ as follows:
\begin{equation}\label{cha}
\mu^{2}-  T_n(\la)\mu+D_n(\la)=0,~~~n\in\mathbb N_0,
\end{equation}
where
\begin{equation}\label{TD0}
T_0(\la)=-c+\ds\f{\la(1-\beta-2\beta\la)}{1+\la},\;\;
D_0(\la)=\beta c\la+\ds\f{c(1-\beta\la)}{1+
\la},
\end{equation}
 and for $n\in\mathbb N$,
\begin{equation}\label{TDn}
\begin{split}
T_n(\la)=&-c+\ds\f{\la(1-\beta\la)}{1+\la}-\ds\f{(d_1+d_2)n^2}{\ell^2},\\
D_n(\la)=&\ds\f{c(1-\beta\la)}{1+
\la}+\left[d_1c-\ds\f{d_2\la(1-\beta\la)}{1+\la}\right]\ds\f{n^2}{\ell^2}+\ds\f{d_1d_2n^4}{\ell^4}.
\end{split}
\end{equation}

In the following, we use parameter $\la$ as a bifurcation parameter to study the stability of $(\la,\la)$ and the associated Hopf bifurcation for model \eqref{nonlocal3}. Note that $\la\in(0,1/\beta)$, and equilibrium $(\la,\la)$ is locally asymptotically stable, if $T_n(\la)<0$ and $D_n(\la)>0$ for each $n\in \N_0$.
It follows from \cite{Yi} that Hopf bifurcation value $\la_0$ satisfies the following condition:

\noindent $(\mathbf{H}_1)$: There exists $n\in \mathbb{N}_0$ such
that
\begin{equation}\label{con1}
    T_n(\la_0)=0, \;\;
D_n(\la_0)>0, \;\; \text{ and } \;\; T_j(\la_0)\ne 0, \;\;
D_j(\la_0)\ne 0 \;\; \text{ for $j\ne n$},
\end{equation}
and the unique pair of complex eigenvalues $\alpha(\la)\pm i \omega(\la)$ near $\la_0$ satisfy
\begin{equation}\label{con2}
    \alpha'(\la_0)\ne 0.
\end{equation}
Denote
$$D(\la,p)=\ds\f{c(1-\beta\la)}{1+
\la}+\left[d_1c-\ds\f{d_2\la(1-\beta\la)}{1+\la}\right]p+d_1d_2p^2,$$
and then $D_n(\la)=D\left(\la,\ds\f{n^2}{\ell^2}\right)$ for $n\ge1$, where $D_n(\la)$ is defined as in Eq. \eqref{TDn}.
The following result gives a necessary and sufficient condition for $D(\la,p)>0$.
\begin{lemma}\label{dp}
$D(\la,p)>0$ for all $p\ge0$ if and only if
\begin{equation}\label{condp}
\ds\f{d_1}{d_2}>\ds\f{1-\beta\la}{c}\left(1-\sqrt{\ds\f{1}{\la+1}}\right)^2,
\end{equation}
where $\la\in(0,1/\beta)$, and $(\la,\la)$ is the unique constant positive equilibrium of model \eqref{nonlocal3}.
\end{lemma}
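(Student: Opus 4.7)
The plan is to view $D(\la,\cdot)$ as a quadratic in $p$ and reduce the positivity-for-$p\ge 0$ condition to a root-location problem, then translate that condition into an explicit inequality for $d_1/d_2$ and match it with the stated form.  Fix $\la\in(0,1/\beta)$, so both $A:=(1-\beta\la)/(1+\la)$ and $B:=\la A$ are strictly positive.  The quadratic $D(\la,p)=d_1d_2\,p^2+(d_1c-d_2B)p+cA$ has positive leading coefficient, and its value at $p=0$ equals $cA>0$; its two roots (if real) have product $cA/(d_1d_2)>0$, hence always the same sign.  Therefore $D(\la,\cdot)>0$ on $[0,\infty)$ if and only if no root is positive, which happens iff \emph{either} the sum of roots is nonpositive, i.e.\ $d_1c-d_2B\ge 0$, \emph{or} the discriminant is strictly negative.

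Next I would compute the discriminant
\[
\Delta(d_1):=(d_1c-d_2B)^2-4d_1d_2\,cA,
\]
treat it as a quadratic in $d_1$ with positive leading coefficient $c^2$, and solve $\Delta(d_1)=0$.  Using $B+2A=A(\la+2)$ and $A(A+B)=A^2(1+\la)$, the two roots collapse into the clean form
\[
d_1^{\pm}=\frac{d_2 A}{c}\bigl(\sqrt{1+\la}\pm 1\bigr)^2,
\]
so $\Delta(d_1)<0$ iff $d_1^{-}<d_1<d_1^{+}$.  This is the main algebraic step; the trick is to recognise $(\la+2)\pm 2\sqrt{1+\la}=(\sqrt{1+\la}\pm 1)^2$.

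Then I would compare the three thresholds $d_1^{-}$, $d_2B/c=d_2 A\la/c$ and $d_1^{+}$.  A direct check using $\la>0$ gives
\[
\bigl(\sqrt{1+\la}-1\bigr)^2<\la<\bigl(\sqrt{1+\la}+1\bigr)^2,
\]
so $d_1^{-}<d_2B/c<d_1^{+}$.  Combining the two sufficient conditions: on $[d_2B/c,\infty)$ the sum-of-roots condition gives $D>0$; on $(d_1^{-},d_2B/c)$ the discriminant is negative and again $D>0$; at $d_1=d_1^{-}$ a double positive root appears and $D$ vanishes there; and for $d_1\le d_1^{-}$ one gets positive real roots. Hence positivity on $[0,\infty)$ is equivalent to $d_1>d_1^{-}$.

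Finally, I would rewrite $d_1>d_1^{-}$ as $d_1/d_2>(A/c)(\sqrt{1+\la}-1)^2$ and simplify
\[
\bigl(\sqrt{1+\la}-1\bigr)^2=(1+\la)\Bigl(1-\tfrac{1}{\sqrt{1+\la}}\Bigr)^{2},\qquad A=\tfrac{1-\beta\la}{1+\la},
\]
so that the threshold takes exactly the form $\tfrac{1-\beta\la}{c}\bigl(1-\sqrt{1/(\la+1)}\bigr)^2$ asserted in \eqref{condp}.  The only real obstacle is the algebraic simplification in the discriminant step; everything else is bookkeeping of the sign of roots of a real quadratic.
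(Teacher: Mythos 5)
Your proposal is correct, and its first half coincides exactly with the paper's proof: the paper also reduces positivity of the quadratic $p\mapsto D(\la,p)$ on $[0,\infty)$ to the dichotomy ``linear coefficient $d_1c-\frac{d_2\la(1-\beta\la)}{1+\la}\ge 0$, or negative discriminant,'' using that the constant term and leading coefficient are positive. Where you diverge is the endgame. The paper merges the two cases into the single inequality $d_1c-\frac{d_2\la(1-\beta\la)}{1+\la}>-2\sqrt{\frac{d_1d_2c(1-\beta\la)}{1+\la}}$ (an inequality of the form $u>-2\sqrt{vw}$ that automatically covers both cases) and then simply asserts its equivalence to \eqref{condp}; verifying that assertion amounts to solving a quadratic in $\sqrt{d_1/d_2}$ with roots $\bigl(\pm\sqrt{1-\beta\la}-\sqrt{(1-\beta\la)/(1+\la)}\bigr)/\sqrt{c}$. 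You instead keep the case split, treat the discriminant as a quadratic in $d_1$ with roots $d_1^{\pm}=\frac{d_2A}{c}(\sqrt{1+\la}\pm 1)^2$ where $A=(1-\beta\la)/(1+\la)$, and locate the threshold $d_2B/c$ between them to conclude positivity iff $d_1>d_1^{-}$. Both computations are sound (your identities $(\la+2)\pm2\sqrt{1+\la}=(\sqrt{1+\la}\pm1)^2$ and $(\sqrt{1+\la}-1)^2=(1+\la)\bigl(1-1/\sqrt{1+\la}\bigr)^2$ check out, and the ordering $(\sqrt{1+\la}-1)^2<\la<(\sqrt{1+\la}+1)^2$ holds for $\la>0$). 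What the paper's condensation buys is brevity; what your route buys is that it makes fully explicit the equivalence the paper leaves as ``which is equivalent to Eq.\ \eqref{condp},'' and it additionally handles the boundary case $d_1=d_1^{-}$ (double positive root, so $D$ vanishes and strict positivity fails), confirming the strict inequality in \eqref{condp} is sharp.
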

\begin{proof}
Obviously, $D(\la,p)>0$ for all $p\ge0$ if and only if one of the following two conditions is satisfied
\begin{enumerate}
\item [(1)] $d_1c-\ds\f{d_2\la(1-\beta\la)}{1+\la}\ge0$, or
\item [(2)] $d_1c-\ds\f{d_2\la(1-\beta\la)}{1+\la}<0$, and $\left[d_1c-\ds\f{d_2\la(1-\beta\la)}{1+\la}\right]^2-4d_1d_2\ds\frac{c(1-\beta\la)}{1+\la}<0$.
\end{enumerate}
Therefore, $D(\la,p)>0$ for all $p\ge0$ if and only if $$d_1c-\ds\f{d_2\la(1-\beta\la)}{1+\la}>-2\sqrt{\ds\f{d_1d_2c(1-\beta\la)}{1+\la}},$$
which is equivalent to Eq. \eqref{condp}. This completes the proof.
\end{proof}
Denote
\begin{equation}\label{cla}
p_1(\la)=\ds\f{1-\beta\la}{c}\left(1-\sqrt{\ds\f{1}{\la+1}}\right)^2,\;p_2(\la)=\ds\f{\la(1-\beta\la)}{1+\la},\;\text{and}\;
p_3(\la)=\ds\f{\la(1-\beta-2\beta\la)}{1+\la}.
\end{equation}
We can easily obtain the following properties on $p_i(\la)$ for $i=1,2,3$, and here we omit the proof.
\begin{lemma}\label{5.1l}
\begin{enumerate}
\item [(I)] There exists
$\la_1\in (0,1/\beta)$, which is the unique positive root of
$$\beta\left(1-\sqrt{\f{1}{\la+1}}\right)=(1-\beta\la)(\la+1)^{-\f{3}{2}},$$
such that $p_1'(\la_1)=0$, $p_1'(\la)<0$ for $\la\in(\la_1,1/\beta)$, $p_1'(\la)>0$ for $\la\in(0,\la_1)$, and
$\max_{\la\in[0,1/\beta]}p_1(\la)=p_1(\la_1)$.
\item [(II)] There exists
$\la_2:=\sqrt{\ds\f{\beta+1}{\beta}}-1$ such that $p_2'(\la_2)=0$, $p_2'(\la)<0$ for $\la\in(\la_2,1/\beta)$, $p_2'(\la)>0$ for $\la\in(0,\la_2)$, and
$\max_{\la\in[0,1/\beta]}p_2(\la)=p_2(\la_2)$.
\item [(III)] Assume that $\beta\le1$. There exists $\la_3:=\sqrt{\ds\f{\beta+1}{2\beta}}-1$ such that $p_3'(\la_3)=0$, $p_3'(\la)<0$ for $\la\in(\la_3,1/\beta)$, $p_3'(\la)>0$ for $\la\in(0,\la_3)$, and
$\max_{\la\in[0,1/\beta]}p_3(\la)=p_3(\la_3)$.
\end{enumerate}
\end{lemma}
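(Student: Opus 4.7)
My plan is to handle the three parts by standard single-variable calculus, treating each $p_i$ separately: compute the derivative, locate its zeros on $(0,1/\beta)$, and check the sign of $p_i'$ on the two subintervals. Since all three functions satisfy $p_i(0)=0$ and $p_i(1/\beta)\le 0$ while being positive somewhere in the interior (at least under the stated hypotheses), a unique interior critical point will automatically be the global maximum on $[0,1/\beta]$.

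For part (I) the main task is to bring $p_1'(\la)=0$ into the stated implicit form and then establish uniqueness of the critical point, since $\la_1$ is not given by an explicit algebraic formula. Writing $p_1(\la)=\frac{1}{c}(1-\beta\la)\bigl(1-(1+\la)^{-1/2}\bigr)^2$ and differentiating, one factor of $\bigl(1-(1+\la)^{-1/2}\bigr)$ can be pulled out; this factor is strictly positive on $(0,1/\beta)$, so $p_1'(\la)=0$ if and only if
\begin{equation*}
\beta\left(1-\sqrt{\tfrac{1}{\la+1}}\right)=(1-\beta\la)(\la+1)^{-3/2},
\end{equation*}
which is precisely the defining equation for $\la_1$. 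To show this equation has a unique root in $(0,1/\beta)$, I would move everything to one side, defining $f(\la):=\beta\bigl(1-(1+\la)^{-1/2}\bigr)-(1-\beta\la)(1+\la)^{-3/2}$, and observe $f(0)=-1<0$ while $f(1/\beta)>0$. Differentiating $f$, every resulting term is manifestly positive on $(0,1/\beta)$ (the derivatives of both $\beta(1-(1+\la)^{-1/2})$ and $-(1-\beta\la)(1+\la)^{-3/2}$ are positive there), so $f$ is strictly increasing and has a unique zero $\la_1$. The sign of $p_1'$ then flips at $\la_1$ from positive to negative, giving the claimed monotonicity and the conclusion $\max_{[0,1/\beta]} p_1 = p_1(\la_1)$.

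For parts (II) and (III) the computation is entirely explicit, since each $p_i'(\la)$ is a rational function whose numerator is a quadratic in $\la$ with positive leading coefficient (for $p_2$) or negative (for $p_3$). For (II), a direct computation gives
\begin{equation*}
p_2'(\la)=\frac{1-2\beta\la-\beta\la^2}{(1+\la)^2},
\end{equation*}
and solving $\beta\la^2+2\beta\la-1=0$ for the positive root yields exactly $\la_2=\sqrt{(\beta+1)/\beta}-1$; since the numerator is a downward-opening parabola in $\la$ that is positive at $\la=0$, it changes sign from $+$ to $-$ at $\la_2$, giving the monotonicity. One only needs to verify $\la_2\in(0,1/\beta)$, which is immediate. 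Part (III) is analogous: the numerator of $p_3'(\la)$ is $1-\beta-4\beta\la-2\beta\la^2$, its positive root is $\la_3=\sqrt{(\beta+1)/(2\beta)}-1$, and the hypothesis $\beta\le 1$ is exactly what ensures $\la_3>0$ (i.e.\ $(\beta+1)/(2\beta)\ge 1$); one then checks $\la_3<1/\beta$ and reads off the sign change.

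The only genuinely nontrivial point is the uniqueness of the critical point in (I), since there $\la_1$ is defined only implicitly; everything else reduces to elementary quadratic algebra and sign analysis, which is presumably why the authors omit the proof.
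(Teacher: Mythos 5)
Your proof is correct; the paper actually omits the proof of this lemma entirely (``here we omit the proof''), and your argument---factoring the positive term $1-(1+\la)^{-1/2}$ out of $p_1'$ to get the stated implicit equation, proving uniqueness of $\la_1$ by showing $f(\la)=\beta\bigl(1-(1+\la)^{-1/2}\bigr)-(1-\beta\la)(1+\la)^{-3/2}$ is strictly increasing with $f(0)=-1<0<f(1/\beta)$, and reducing (II)--(III) to explicit quadratics---is precisely the routine calculus verification the authors intend. Two cosmetic slips worth fixing: your parenthetical asserting the numerator of $p_2'$ has \emph{positive} leading coefficient contradicts your own correct formula $1-2\beta\la-\beta\la^2$ (and your later, correct, ``downward-opening parabola'' description---both numerators have leading coefficient $-\beta$ and $-2\beta$ respectively); and under the hypothesis $\beta\le 1$ one only gets $\la_3\ge 0$, with $\la_3=0$ at $\beta=1$, in which case the interval $(0,\la_3)$ is empty and the lemma's conclusions hold vacuously, so the strict inequality $\la_3>0$ you state requires $\beta<1$.
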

From Lemmas \ref{dp} and \ref{5.1l}, we have:
\begin{theorem}\label{Dsign}
Assume that $d_1/d_2>p_1(\la_1)$, where $p_1(\la)$ and $\la_1$ are defined as in Eq. \eqref{cla} and Lemma \ref{5.1l} respectively. Then $D(\la,p)>0$ for any $\la\in(0,1/\beta)$ and $p\ge0$, and consequently, $D_n(\la)>0$ for each $n\in\mathbb{N}_0$ and any $\la\in(0,1/\beta)$.
\end{theorem}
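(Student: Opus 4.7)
The plan is to reduce the theorem to a direct combination of Lemma \ref{dp} and Lemma \ref{5.1l}(I), then handle the $n=0$ case separately because the formula for $D_0(\lambda)$ in \eqref{TD0} carries the extra nonlocal term $\beta c\lambda$ and is not of the form $D(\lambda,n^2/\ell^2)$.

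First, I would invoke Lemma \ref{dp}, which says that for a given $\lambda\in(0,1/\beta)$, the polynomial $D(\lambda,p)$ is positive on all of $p\ge 0$ if and only if $d_1/d_2>p_1(\lambda)$. Lemma \ref{5.1l}(I) asserts $p_1(\lambda_1)=\max_{\lambda\in[0,1/\beta]}p_1(\lambda)$, so the hypothesis $d_1/d_2>p_1(\lambda_1)$ upgrades to $d_1/d_2>p_1(\lambda_1)\ge p_1(\lambda)$ for every $\lambda\in(0,1/\beta)$. Applying Lemma \ref{dp} pointwise in $\lambda$ then yields $D(\lambda,p)>0$ for all such $\lambda$ and all $p\ge 0$, which is the first assertion of the theorem.

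For the second assertion, the case $n\ge 1$ is immediate: by the identity $D_n(\lambda)=D(\lambda,n^2/\ell^2)$ noted just before Lemma \ref{dp}, the previous paragraph gives $D_n(\lambda)>0$ for every $\lambda\in(0,1/\beta)$. For $n=0$, I would argue directly from \eqref{TD0}: since $\lambda\in(0,1/\beta)$ forces $1-\beta\lambda>0$, and $c,\lambda,1+\lambda$ are all positive, both summands $\beta c\lambda$ and $c(1-\beta\lambda)/(1+\lambda)$ are positive, so $D_0(\lambda)>0$ without any condition on the diffusion ratio.

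There is no serious obstacle here; the whole statement is bookkeeping once the two earlier lemmas are in hand. The only point to be careful about is not to overlook $n=0$, whose positivity comes from the structure of $D_0$ itself (and is in fact automatic under $\lambda\in(0,1/\beta)$) rather than from the spectral condition $d_1/d_2>p_1(\lambda_1)$.
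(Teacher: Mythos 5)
Your proposal is correct and matches the paper's intended argument: the paper gives no written proof beyond the phrase ``From Lemmas \ref{dp} and \ref{5.1l}, we have,'' and your pointwise application of Lemma \ref{dp} together with $d_1/d_2>p_1(\la_1)=\max_{\la\in[0,1/\beta]}p_1(\la)$ from Lemma \ref{5.1l}(I) is exactly that combination. Your separate treatment of $n=0$ via the two positive summands in $D_0(\la)$ from Eq.~\eqref{TD0} is a correct (and slightly more careful) handling of a case the paper's ``consequently'' glosses over, since $D_0(\la)=D(\la,0)+\beta c\la$ is not literally covered by the identity $D_n(\la)=D(\la,n^2/\ell^2)$, which holds only for $n\ge 1$.
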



Now, we show the occurrence of Hopf bifurcation for $\beta\ge1$.

\begin{theorem}\label{Hopfm}
Suppose that $d_1/d_2>p_1(\la_1)$, $\beta\ge1$,
and define \begin{equation}\label{elln}
\ell_n=n\sqrt{\ds\f{d_1+d_2}{p_2(\la_2)-c}}\; \text{  if  } \;p_2(\la_2)-c>0,
\end{equation}
where $p_i(\la), \la_i (i=1,2)$ are defined as in Eq. \eqref{cla} and Lemma \ref{5.1l} respectively. Then the following two statements are true.
\begin{enumerate}
\item [(i)] If $c\ge p_2(\la_2)$, or $c< p_2(\la_2)$ but $\ell \in (0,\ell_1)$, then $(\la,\la)$ is locally asymptotically stable for $\la\in(0,1/\beta)$.
\item [(ii)] If $c< p_2(\la_2)$ and $\ell \in (\ell_1 ,\infty)$, then there exist two points $\la_{1,+}^H$ and $\la_{1,-}^H$ satisfying
    \begin{equation}\label{Hopp}
    T_1(\la_{1,-}^H)=T_1(\la_{1,+}^H)=0\;\;\text{and}\;\;0<\la_{1,-}^H<\la_2<\la_{1,+}^H<1/\beta,
    \end{equation}
    where $T_1(\la)$ is defined as in Eq. \eqref{TDn}, such that $(\la,\la)$ is locally asymptotically stable for $\la\in\left(\la_{1,+}^H,1/\beta\right)\cup\left(0,\la_{1,-}^H\right)$ and unstable for $\la\in\left(\la_{1,-}^H,\la_{1,+}^H\right)$. Moreover, system \eqref{nonlocal3} undergoes Hopf bifurcation at $(\la,\la)$ when $\la=\la_{1,+}^H$ or $\la=\la_{1,-}^H$, and the bifurcating periodic solutions near $\la_{1,+}^H$ or $\la_{1,-}^H$ are spatially nonhomogeneous.

\end{enumerate}

\end{theorem}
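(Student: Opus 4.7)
My plan is to reduce the entire analysis to the sign of the traces $T_n(\la)$, since the standing hypothesis $d_1/d_2>p_1(\la_1)$ combined with Theorem \ref{Dsign} already forces $D_n(\la)>0$ for every $n\in\mathbb{N}_0$ and every $\la\in(0,1/\beta)$. Consequently, $(\la,\la)$ is locally asymptotically stable precisely when $T_n(\la)<0$ for every $n$, and the Hopf candidates are exactly the zeros of some $T_n$, with $D_n>0$ then guaranteeing a purely imaginary pair.

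I would begin by eliminating the spatially homogeneous mode. Under $\beta\ge 1$ the factor $1-\beta-2\beta\la$ in $p_3(\la)$ is strictly negative on $(0,1/\beta)$, so $p_3(\la)<0$ and $T_0(\la)=-c+p_3(\la)<0$ throughout. Thus $n=0$ can never destabilize $(\la,\la)$; this is the structural reason any bifurcating periodic orbit must be spatially nonhomogeneous.

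For $n\ge 1$,
\begin{equation*}
T_n(\la)=p_2(\la)-c-\ds\f{(d_1+d_2)n^2}{\ell^2}
\end{equation*}
is strictly decreasing in $n$, so the first mode to become critical is $n=1$. By Lemma \ref{5.1l}(II), $p_2$ is unimodal on $[0,1/\beta]$ with maximum $p_2(\la_2)$ and vanishes at both endpoints. Part (i) follows immediately: if $c\ge p_2(\la_2)$ then $T_1(\la)\le -(d_1+d_2)/\ell^2<0$, and if $c<p_2(\la_2)$ with $\ell<\ell_1$ then $c+(d_1+d_2)/\ell^2>p_2(\la_2)\ge p_2(\la)$, so $T_1(\la)<0$. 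Combined with $T_n<T_1$ for $n\ge 2$ and $T_0<0$, every $T_n<0$ and $(\la,\la)$ is stable. For part (ii), when $\ell>\ell_1$ the horizontal level $c+(d_1+d_2)/\ell^2$ lies strictly below $p_2(\la_2)$, so unimodality of $p_2$ delivers exactly two zeros $\la_{1,-}^H\in(0,\la_2)$ and $\la_{1,+}^H\in(\la_2,1/\beta)$ of $T_1$, with $T_1>0$ between them and $T_1<0$ outside; the claimed stability/instability assignments follow by the same mode comparison.

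To verify the Hopf conditions $(\mathbf{H}_1)$ at $\la=\la_{1,\pm}^H$ I would check three items: (a) $D_1(\la_{1,\pm}^H)>0$, directly from Theorem \ref{Dsign}; (b) no other mode is simultaneously critical, since $T_0<0$ and $T_n(\la_{1,\pm}^H)<T_1(\la_{1,\pm}^H)=0$ for $n\ge 2$; and (c) transversality, which follows from the identity $\alpha(\la)=T_1(\la)/2$ on the $n=1$ eigenspace together with
\begin{equation*}
\alpha'(\la_{1,\pm}^H)=\ds\f{1}{2}p_2'(\la_{1,\pm}^H)\ne 0,
\end{equation*}
where the nonvanishing holds because $\la_{1,\pm}^H\ne\la_2$ by Lemma \ref{5.1l}(II). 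Since the critical eigenfunction on the $n=1$ eigenspace is proportional to $\cos(x/\ell)$, the bifurcating periodic solutions inherit this nonconstant spatial profile, establishing the spatial nonhomogeneity. The only subtle step is the bookkeeping that exactly one mode is critical at $\la_{1,\pm}^H$; this is resolved by combining $\beta\ge 1$ (killing $n=0$), Theorem \ref{Dsign} (killing all $D_n$), and the strict monotonicity $T_n<T_1$ for $n\ge 2$ (killing the higher modes), after which the classical Hopf theorem from \cite{hassard1981theory} applies on the one-dimensional eigenspace spanned by $\cos(x/\ell)$.
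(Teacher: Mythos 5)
Your proposal is correct and follows essentially the same route as the paper's proof: use $d_1/d_2>p_1(\la_1)$ to force $D_n(\la)>0$ for all modes, use $\beta\ge1$ to kill the $n=0$ trace, locate the two zeros of $T_1$ via the unimodality of $p_2$ from Lemma \ref{5.1l}(II), and verify transversality through $\alpha'(\la_{1,\pm}^H)=p_2'(\la_{1,\pm}^H)/2\ne0$. Your write-up is in fact slightly more complete than the paper's, since you explicitly record the mode comparison $T_n<T_1$ for $n\ge2$ and the endpoint vanishing of $p_2$ that places both bifurcation points strictly inside $(0,1/\beta)$, steps the paper leaves implicit.
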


\begin{proof}
It follows from $\beta\ge1$ that $T_0(\la)<0$ and $D_0(\la)>0$ for any $\la\in (0,1/\beta)$. Since
$d_1/d_2>p_1(\la_1)$, we see that $D_n(\la)>0$ for any $\la\in(0,1/\beta)$ and each $n\in\mathbb N_0$. Note that if $c\ge p_2(\la_2)$, or $c< p_2(\la_2)$ but $\ell \in (0,\ell_1)$, then $T_n(\la)<0$ for any $\la\in(0,1/\beta)$ and each $n\in\mathbb{N}$. Therefore, $(\la,\la)$ is locally asymptotically stable for $\la\in(0,1/\beta)$.

 When $\ell \in (\ell_1 ,\infty)$, there exist two points $\la_{1,+}^H$ and $\la_{1,-}^H$ such that
  $T_1\left(\la_{1,\pm}^H\right)=0$, $T_1(\la)>0$ for $\la\in\left(\la_{1,-}^H,\la_{1,+}^H\right)$, and $T_1(\la)<0$ for $\la\in\left(0,\la_{1,-}^H\right)\cup\left(\la_{1,+}^H,1/\beta\right)$.
  Therefore, $(\la,\la)$ is locally asymptotically stable for $\la\in\left(\la_{1,+}^H,1/\beta\right)\cup\left(0,\la_{1,-}^H\right)$ and unstable for $\la\in\left(\la_{1,-}^H,\la_{1,+}^H\right)$.
 When $\la$ is near $\la_{1,+}^H$ (respectively, $\la_{1,-}^H$), the unique pair of eigenvalues $\alpha(\la)\pm i\omega(\la)$ satisfy $\alpha(\la)=T_1(\la)/2$ and $\alpha'\left(\la_{1,+}^H\right)=p'_2\left(\la_{1,+}^H\right)/2$ (respectively, $\alpha'\left(\la_{1,-}^H\right)=p'_2\left(\la_{1,-}^H\right)/2$). From Lemma \ref{5.1l}, we see that
$ \alpha'\left(\la_{1,+}^H\right)<0$ and $ \alpha'\left(\la_{1,-}^H\right)>0$.
Therefore, $\la_{1,+}^H$ and $\la_{1,-}^H$ are both Hopf bifurcation points, and the bifurcating periodic solutions are spatially nonhomogeneous. This completes the proof.
\end{proof}
The case that $\beta<1$ is more complex. We first consider the case that $\beta<1$ and $c>p_3(\la_3)$. In this case, $T_0(\la)<0$ for any $\la\in(0,1/\beta)$.
Therefore, we have the similar results as in the case of $\beta\ge1$, and here we omit the proof.
\begin{theorem}\label{Hopfm2}
Suppose that $d_1/d_2>p_1(\la_1)$, $\beta<1$ and $c> p_3(\la_3)$,
where $p_i(\la), \la_i (i=1,3)$ are defined as in Eq. \eqref{cla} and Lemma \ref{5.1l} respectively. Then the following two statements are true.
\begin{enumerate}
\item [(i)] If $c\ge p_2(\la_2)$, or $p_3(\la_3)<c< p_2(\la_2)$ but $\ell \in (0,\ell_1)$, where $p_2(\la), \la_2$ are defined as in Eq. \eqref{cla} and Lemma \ref{5.1l} respectively, and $\ell_1$ is defined as in Eq. \eqref{elln}, then $(\la,\la)$ is locally asymptotically stable for $\la\in(0,1/\beta)$.
\item [(ii)] If $p_3(\la_3)<c< p_2(\la_2)$ and $\ell \in (\ell_1 ,\infty)$, then $(\la,\la)$ is locally asymptotically stable for $\la\in\left(\la_{1,+}^H,1/\beta\right)\cup\left(0,\la_{1,-}^H\right)$ and unstable for $\la\in\left(\la_{1,-}^H,\la_{1,+}^H\right)$, where $\la_{1,-}^H$ and $\la_{1,+}^H$ are defined as in Eq. \eqref{Hopp}. Moreover, system \eqref{nonlocal3} undergoes Hopf bifurcation at $(\la,\la)$ when $\la=\la_{1,+}^H$ or $\la=\la_{1,-}^H$, and the bifurcating periodic solutions near $\la_{1,+}^H$ or $\la_{1,-}^H$ are spatially nonhomogeneous.

\end{enumerate}
\end{theorem}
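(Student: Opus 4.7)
The plan is to mirror the argument used for Theorem \ref{Hopfm}, with the role previously played by the hypothesis $\beta\ge 1$ (which automatically gave $T_0(\la)<0$) now played by the new assumption $c>p_3(\la_3)$. First I would verify that all zeroth-mode quantities are well behaved on the whole interval: from the identity $T_0(\la)=-c+p_3(\la)$ together with part (III) of Lemma \ref{5.1l}, the inequality $p_3(\la)\le p_3(\la_3)<c$ holds throughout $(0,1/\beta)$, so $T_0(\la)<0$; the positivity $D_0(\la)>0$ is immediate from $1-\beta\la>0$. Combining this with Theorem \ref{Dsign}, which under $d_1/d_2>p_1(\la_1)$ gives $D_n(\la)>0$ for every $n\in\mathbb N_0$, the local stability of $(\la,\la)$ is controlled entirely by the signs of $T_n(\la)$ for $n\ge 1$.

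For part (i), I would use the decomposition $T_n(\la)=-c+p_2(\la)-(d_1+d_2)n^2/\ell^2$ together with $p_2(\la)\le p_2(\la_2)$ from Lemma \ref{5.1l}(II). The condition $c\ge p_2(\la_2)$ forces every $T_n$ strictly negative on $(0,1/\beta)$. When $p_3(\la_3)<c<p_2(\la_2)$, the inequality $T_n(\la)>0$ would require $\ell>\ell_n$, which is impossible for every $n\ge 1$ once $\ell<\ell_1$. In either case $(\la,\la)$ is locally asymptotically stable throughout $(0,1/\beta)$.

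For part (ii), when $\ell>\ell_1$ the function $\la\mapsto T_1(\la)=-c+p_2(\la)-(d_1+d_2)/\ell^2$ inherits the shape of $p_2$ described in Lemma \ref{5.1l}(II): it is negative near the endpoints of $(0,1/\beta)$, strictly positive in a neighborhood of $\la_2$, and strictly increasing, respectively decreasing, on $(0,\la_2)$, respectively $(\la_2,1/\beta)$. The intermediate value theorem therefore produces the unique pair $\la_{1,-}^H<\la_2<\la_{1,+}^H$ of zeros of $T_1$ described in \eqref{Hopp}, with $T_1>0$ exactly in between, which fixes the stability regions claimed. Because $T_n(\la)<T_1(\la)$ for $n\ge 2$, no other mode has a zero at these points, so at each $\la_{1,\pm}^H$ precisely one pair of purely imaginary eigenvalues exists and it corresponds to the mode $n=1$.

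Finally, the transversality condition in $(\mathbf{H}_1)$ is checked as follows: the eigenvalues of the $n=1$ block split as $\alpha(\la)\pm i\omega(\la)$ with $\alpha(\la)=T_1(\la)/2$, so $\alpha'(\la_{1,\pm}^H)=p_2'(\la_{1,\pm}^H)/2$, which by Lemma \ref{5.1l}(II) is negative at $\la_{1,+}^H$ and positive at $\la_{1,-}^H$, hence nonzero. Therefore Hopf bifurcation occurs at both values, and because the critical mode is $n=1$ rather than $n=0$, the bifurcating periodic solutions are spatially nonhomogeneous. I expect no genuine obstacle here: the only substantive departure from the proof of Theorem \ref{Hopfm} is the use of $c>p_3(\la_3)$ in place of $\beta\ge 1$ to secure $T_0<0$, and the remaining steps are bookkeeping parallel to the previous theorem.
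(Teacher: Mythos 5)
Your proposal is correct and follows exactly the route the paper intends: the paper omits the proof of Theorem \ref{Hopfm2}, remarking only that $c>p_3(\la_3)$ gives $T_0(\la)<0$ on $(0,1/\beta)$ so that the argument of Theorem \ref{Hopfm} carries over, and your write-up is precisely that adaptation (with the mode-$1$ zeros of $T_1$, the exclusion of other modes via $T_n<T_1$ for $n\ge 2$ and $T_0<0$, and transversality via $\alpha'(\la_{1,\pm}^H)=p_2'(\la_{1,\pm}^H)/2$) carried out in full.
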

Then we consider the case that $\beta<1$ and $c<p_3(\la_3)$. In this case, there exists two points $\la_{0,-}^H$ and $\la_{0,+}^H$
satisfying
\begin{equation}\label{Hopf0}
T_0(\la_{0,-}^H)=T_0(\la_{0,+}^H)\;\;\text{and}\;\;\la_{0,-}^H<\la_3<\la_{0,+}^H.
\end{equation}
These two points are possible Hopf bifurcation points, and the bifurcating periodic solutions are spatially homogeneous.
\begin{theorem}\label{Hopfm3}
Suppose that $d_1/d_2>p_1(\la_1)$, $\beta<1$, and $c<p_3(\la_3)$, where $p_i, \la_i (i=1,3)$ are defined as in Eq. \eqref{cla} and Lemma \ref{5.1l} respectively.
Define
\begin{equation}\label{tildelln}
\tilde\ell^+_n=n\sqrt{\ds\f{d_1+d_2}{\beta\la_{0,+}^H}} \;\;\text{and}\;\;\tilde\ell^-_n=n\sqrt{\ds\f{d_1+d_2}{\beta\la_{0,-}^H}},
\end{equation}
 where $\la_{0,-}^H$ and $\la_{0,+}^H$ are defined as in Eq. \eqref{Hopf0}.
\begin{enumerate}
\item [(i)]For $\la_{0,+}^H>\la_2$, where $\la_2$ is defined as in Lemma \ref{5.1l}, the following results hold.
\begin{enumerate}
\item [($i_1$)] If $\ell\in(0,\tilde\ell_1^+)$, then $(\la,\la)$ is locally asymptotically stable for $\la\in(0,\la_{0,-}^H)\cup(\la_{0,+}^H,1/\beta)$ and unstable for $\la\in(\la_{0,-}^H,\la_{0,+}^H)$. Moreover, system \eqref{nonlocal3} undergoes Hopf bifurcation at $(\la,\la)$ when $\la=\la_{0,+}^H$ or $\la=\la_{0,-}^H$, and the bifurcating periodic solutions near $\la_{0,+}^H$ or $\la_{0,-}^H$ are spatially homogeneous.
\item [($i_2$)] If $\ell\in(\tilde\ell_1^+,\tilde\ell_1^-)$, then $\la_{0,-}^H<\la_{1,-}^H<\la_2<\la_{0,+}^H<\la_{1,+}^H$, where $\la_{1,-}^H$ and $\la_{1,+}^H$ are defined as in Eq. \eqref{Hopp},
and $(\la,\la)$ is locally asymptotically stable for $\la\in(0,\la_{0,-}^H)\cup(\la_{1,+}^H,1/\beta)$ and unstable for $\la\in(\la_{0,-}^H,\la_{1,+}^H)$. Moreover, system \eqref{nonlocal3} undergoes Hopf bifurcation at $(\la,\la)$ when $\la=\la_{0,-}^H$ or $\la=\la_{1,-}^H$, and the bifurcating periodic solutions near $\la_{0,-}^H$ (respectively, $\la_{1,+}^H$) are spatially homogeneous (respectively, spatially nonhomogeneous).
\item [($i_3$)] If $\ell\in (\tilde\ell_1^-,\infty) $,
then $\la_{1,-}^H<\la_{0,-}^H<\la_2<\la_{0,+}^H<\la_{1,+}^H$, and $(\la,\la)$ is locally asymptotically stable for $\la\in\left(\la_{1,+}^H,1/\beta\right)\cup\left(0,\la_{1,-}^H\right)$ and unstable for $\la\in\left(\la_{1,-}^H,\la_{1,+}^H\right)$. Moreover, system \eqref{nonlocal3} undergoes Hopf bifurcation at $(\la,\la)$ when $\la=\la_{1,+}^H$ or $\la=\la_{1,-}^H$, and the bifurcating periodic solutions near $\la_{1,+}^H$ or $\la_{1,-}^H$ are spatially nonhomogeneous.
\end{enumerate}
\item [(ii)] For $\la_{0,+}^H<\la_2$, the following results hold.
\begin{enumerate}
\item [($ii_1$)] If $\ell\in (0,\ell_1)$, where $\ell_1$ is defined as in Eq. \eqref{elln}, then $(\la,\la)$ is locally asymptotically stable for $\la\in(0,\la_{0,-}^H)\cup(\la_{0,+}^H,1/\beta)$ and unstable for $\la\in(\la_{0,-}^H,\la_{0,+}^H)$. Moreover, system \eqref{nonlocal3} undergoes Hopf bifurcation at $(\la,\la)$ when $\la=\la_{0,+}^H$ or $\la=\la_{0,-}^H$, and the bifurcating periodic solutions near $\la_{0,+}^H$ or $\la_{0,-}^H$ are spatially homogeneous.
\item [($ii_2$)] If $\ell\in(\ell_1,\tilde\ell_1^+)$, then $\la_{0,-}^H<\la_{0,+}^H<\la_{1,-}^H<\la_2<\la_{1,+}^H$,
and $(\la,\la)$ is locally asymptotically stable for $\la\in(0,\la_{0,-}^H)\cup(\la_{0,+}^H,\la_{1,-}^H)\cup(\la_{1,+}^H,1/\beta)$ and unstable for $\la\in(\la_{0,-}^H,\la_{0,+}^H)\cup(\la_{1,-}^H,\la_{1,+}^H)$. Moreover, system \eqref{nonlocal3} undergoes Hopf bifurcation at $(\la,\la)$ when $\la=\la_{0,-}^H$ or $\la=\la_{1,+}^H$, and the bifurcating periodic solutions near $\la_{0,-}^H$ (respectively, $\la_{1,+}^H$) are spatially homogeneous (respectively, spatially nonhomogeneous).
\item [($ii_3$)] If $\ell\in(\tilde\ell_1^+,\tilde\ell_1^-)$, then $\la_{0,-}^H<\la_{1,-}^H<\la_{0,+}^H<\la_2<\la_{1,+}^H$,
and $(\la,\la)$ is locally asymptotically stable for $\la\in(0,\la_{0,-}^H)\cup(\la_{1,+}^H,1/\beta)$ and unstable for $\la\in(\la_{0,-}^H,\la_{1,+}^H)$. Moreover, system \eqref{nonlocal3} undergoes Hopf bifurcation at $(\la,\la)$ when $\la=\la_{0,-}^H$ or $\la=\la_{1,-}^H$, and the bifurcating periodic solutions near $\la_{0,-}^H$ (respectively, $\la_{1,+}^H$) are spatially homogeneous (respectively, spatially nonhomogeneous).
\item [($ii_4$)] If $\ell\in (\tilde\ell_1^-,\infty) $,
then $\la_{1,-}^H<\la_{0,-}^H<\la_{0,+}^H<\la_2<\la_{1,+}^H$, and $(\la,\la)$ is locally asymptotically stable for $\la\in\left(\la_{1,+}^H,1/\beta\right)\cup\left(0,\la_{1,-}^H\right)$ and unstable for $\la\in\left(\la_{1,-}^H,\la_{1,+}^H\right)$. Moreover, system \eqref{nonlocal3} undergoes Hopf bifurcation at $(\la,\la)$ when $\la=\la_{1,+}^H$ or $\la=\la_{1,-}^H$, and the bifurcating periodic solutions near $\la_{1,+}^H$ or $\la_{1,-}^H$ are spatially nonhomogeneous.
\end{enumerate}
\end{enumerate}
\end{theorem}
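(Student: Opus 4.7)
The plan is to mirror the arguments of Theorems~\ref{Hopfm} and~\ref{Hopfm2}, now tracking \emph{two} competing sources of instability: the spatially homogeneous trace $T_0$ and the mode-one trace $T_1$. Since $d_1/d_2>p_1(\la_1)$, Theorem~\ref{Dsign} gives $D_n(\la)>0$ for every $n\in\mathbb{N}_0$ and $\la\in(0,1/\beta)$, so both the stability of $(\la,\la)$ and the occurrence of Hopf bifurcation are controlled solely by the signs of $T_n(\la)$.

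The key algebraic observation is the identity $p_2(\la)-p_3(\la)=\beta\la$, which, combined with \eqref{TD0} and \eqref{TDn}, yields
\[
T_1\!\left(\la_{0,\pm}^H\right)=\beta\la_{0,\pm}^H-\f{d_1+d_2}{\ell^2}.
\]
Hence $T_1(\la_{0,+}^H)=0$ iff $\ell=\tilde\ell_1^+$ and $T_1(\la_{0,-}^H)=0$ iff $\ell=\tilde\ell_1^-$, and $\tilde\ell_1^+<\tilde\ell_1^-$ follows from $\la_{0,-}^H<\la_{0,+}^H$. Comparing $\ell$ against $\ell_1,\tilde\ell_1^+,\tilde\ell_1^-$ and exploiting the monotonicity of $p_2,p_3$ on either side of their maxima $\la_2,\la_3$ (with $\la_3<\la_2$) from Lemma~\ref{5.1l}, each $\ell$-hypothesis translates directly into the interleaving of $\la_{0,\pm}^H$ and $\la_{1,\pm}^H$ asserted in the respective subcase. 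The higher modes are harmless: since $T_n=T_1-(d_1+d_2)(n^2-1)/\ell^2\le T_1$ for $n\ge 2$, any root $\la_{n,\pm}^H$ lies inside $(\la_{1,-}^H,\la_{1,+}^H)$ and cannot extend the unstable set.

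Each of the eight subcases ($i_1$)--($ii_4$) then reduces to bookkeeping: I would walk $\la$ from $0$ to $1/\beta$, list the four candidate points in the order dictated by the current regime, record which of $T_0$ or $T_1$ changes sign at each, and declare an interval stable precisely when both $T_0$ and $T_1$ are negative on it. The outermost boundary points of the resulting unstable set are the claimed Hopf values, and the homogeneous/nonhomogeneous label is read off from whether $T_0$ or $T_1$ produced the crossing. Transversality follows from $\alpha'(\la_*)=\tfrac12 p_3'(\la_*)$ or $\tfrac12 p_2'(\la_*)$, which Lemma~\ref{5.1l}(II)--(III) delivers because $\la_3\in(\la_{0,-}^H,\la_{0,+}^H)$ and $\la_2\in(\la_{1,-}^H,\la_{1,+}^H)$.

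The principal obstacle is the nonresonance clause of $(\mathbf{H}_1)$, namely $T_j(\la_*)\ne0$ for $j\ne n$ at each Hopf point. Two distinct positive modes cannot vanish at the same $\la$ (it would force $m=n$), while the coincidences $T_0(\la_{0,\pm}^H)=T_1(\la_{0,\pm}^H)=0$ are excluded exactly by the strict inequalities $\ell\ne\tilde\ell_1^\pm$ defining the open regimes; the residual higher-order coincidences $\ell=n\tilde\ell_1^\pm$ with $n\ge 2$ form a discrete set and are understood to be excluded in the generic statement. Combined with the always-positive $D_n$ from Theorem~\ref{Dsign}, this verifies $(\mathbf{H}_1)$, and the Hopf bifurcation framework of~\cite{Yi} completes the argument.
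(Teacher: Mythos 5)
Your proposal is correct and follows essentially the same route as the paper's proof: with $D_n(\la)>0$ for $n\ge 1$ secured by Theorem~\ref{Dsign} (and $D_0(\la)>0$ directly from \eqref{TD0}), everything reduces to the signs of $T_0$ and $T_1$; your identity $p_2(\la)-p_3(\la)=\beta\la$ is exactly what underlies the paper's computation $T_1\bigl(\la_{0,\pm}^H\bigr)=\beta\la_{0,\pm}^H-(d_1+d_2)/\ell^2$, the comparison of $\ell$ against $\ell_1<\tilde\ell_1^+<\tilde\ell_1^-$ together with the monotonicity of $p_2,p_3$ from Lemma~\ref{5.1l} yields the interleavings subcase by subcase, and the transversality argument via $\alpha'=\tfrac12 p_2'$ or $\tfrac12 p_3'$ is verbatim the paper's. (The paper writes out only part (ii) and is silent on the higher modes and the nonresonance clause of $(\mathbf{H}_1)$, which you supply explicitly --- a genuine improvement in completeness.)

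The one flaw is your closing hedge: the coincidences $\ell=n\tilde\ell_1^\pm$ with $n\ge2$ do \emph{not} need to be ``excluded in the generic statement,'' so as written you prove a slightly weaker result than the theorem claims. In fact no exclusion is needed: $T_n\bigl(\la_{0,\pm}^H\bigr)=\beta\la_{0,\pm}^H-(d_1+d_2)n^2/\ell^2$ vanishes for some $n\ge2$ only when $\ell\ge 2\tilde\ell_1^\pm$, whereas $\la_{0,+}^H$ (respectively $\la_{0,-}^H$) is claimed as a Hopf point only in regimes with $\ell<\tilde\ell_1^+$ (respectively $\ell<\tilde\ell_1^-$), where $T_n\bigl(\la_{0,\pm}^H\bigr)\le T_1\bigl(\la_{0,\pm}^H\bigr)<0$ for all $n\ge1$; and at $\la_{1,\pm}^H$ one has $T_n=-(d_1+d_2)(n^2-1)/\ell^2<0$ for $n\ge2$, while $T_0\bigl(\la_{1,\pm}^H\bigr)\ne0$ follows from the strict orderings you established. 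Hence $(\mathbf{H}_1)$ holds for \emph{every} $\ell$ in each stated open interval, and your argument, with this observation replacing the genericity disclaimer, proves the theorem in full strength.
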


\begin{proof}
We only prove part ($ii$), and part ($i$) can be proved similarly. A direct computation yields
\begin{equation*}
\begin{split}
&T_1(\la_{0,\pm}^H)=\beta\la_{0,\pm}^H-\ds\f{(d_1+d_2)n^2}{\ell^2},\\
&T_1(\la_2)=\max_{\la\in(0,1/\beta)}T_1(\la)=p_2(\la_2)-c-\ds\f{(d_1+d_2)n^2}{\ell^2},
\end{split}
\end{equation*}
and $\ell_1<\tilde\ell_1^+<\tilde\ell_1^-$.
Therefore,
\begin{enumerate}
\item [(1)]if $\ell\in (0,\ell_1)$, then $T_1(\la)<0$ for any $\la\in(0,1/\beta)$;
\item [(2)] if $\ell>\ell_1$, then $T_1(\la)=0$ has two positive roots $\la_{1,+}^H$ and $\la_{1,-}^H$ such that
$T_1(\la)>0$ for $\la\in (\la_{1,-}^H,\la_{1,+}^H)$ and $T_1(\la)<0$ for $\la\in(0,\la_{1,-}^H)\cup (\la_{1,+}^H,1/\beta)$. Moreover, $\la_{0,-}^H<\la_{0,+}^H<\la_{1,-}^H<\la_2<\la_{1,+}^H$ for $\ell\in(\ell_1, \tilde\ell_1^+)$, $\la_{0,-}^H<\la_{1,-}^H<\la_{0,+}^H<\la_2<\la_{1,+}^H$ for $\ell\in(\tilde\ell_1^+,\tilde\ell_1^-)$, and $\la_{1,-}^H<\la_{0,-}^H<\la_{0,+}^H<\la_2<\la_{1,+}^H$ for $\ell\in (\tilde\ell_1^-,\infty)$.
\end{enumerate}
Note that when $\la$ is near $\la_{i,\pm}^H$, the unique pair of eigenvalues $\alpha(\la)\pm i\omega(\la)$ satisfy $\alpha(\la)=T_i(\la)/2$, and consequently, $\alpha'\left(\la_{1,\pm}^H\right)=p'_2\left(\la_{1,\pm}^H\right)/2$, and $\alpha'\left(\la_{0,\pm}^H\right)=p'_3\left(\la_{0,\pm}^H\right)/2$. Therefore
$ \alpha'\left(\la_{i,+}^H\right)<0$ and $ \alpha'\left(\la_{i,-}^H\right)>0$ for $i=0,1$, and
part $(ii)$ is proved.

\end{proof}
\begin{remark}
To investigate the effect of the non-locality, we revisit the classical Holling-Tanner predator-prey model without nonlocal effect,
\begin{equation}\label{classi}
\begin{cases}
  \ds\frac{\partial u}{\partial t}=d_1 u_{xx}+u\left(1-\beta u\right)-\ds\frac{buv}{u+1}, & x\in (0,\ell\pi),\; t>0,\\
 \ds\frac{\partial v}{\partial t}=d_2 v_{xx}+cv\left(1-\ds\frac{v}{u}\right), & x\in (0,\ell\pi),\; t>0,\\
  u_x(0,t)=
  v_x(0,t)=0,\;u_x(\ell\pi,t)=
  v_x(\ell\pi,t)=0,&
 t>0,\\
 u(x,0)=u_0(x)>0,\;\;v(x,0)=v_0(x)>0.
\end{cases}
\end{equation}
For the case of $\beta>1$, a direct calculation implies that there exist no Hopf bifurcation points for model \eqref{classi}, which satisfy assumption $(\mathbf{H}_1)$, (see also
\cite{LiJiang}). However, it follow form Theorem \ref{Hopfm} that under certain conditions Hopf bifurcation can occur for model \eqref{nonlocal3} with nonlocal effect, and the bifurcating periodic solutions are spatially nonhomogeneous. Therefore,
Hopf bifurcation is more likely to occur with the nonlocal competition of prey. Similar results can be obtained for the case of $\beta<1$.
\end{remark}

\begin{remark}\label{re}
It follows from Theorems \ref{Hopfm}-\ref{Hopfm3} that, if $d_1/d_2>p_1(\la)$ and $c<p_2(\la)$, then, for sufficiently large $\ell$, $(\la,\la)$ is locally asymptotically stable for $\la\in\left(\la_{1,+}^H,1/\beta\right)\cup\left(0,\la_{1,-}^H\right)$ and unstable for $\la\in\left(\la_{1,-}^H,\la_{1,+}^H\right)$. Moreover, system \eqref{nonlocal3} undergoes Hopf bifurcation at $(\la,\la)$ when $\la=\la_{1,+}^H$ or $\la=\la_{1,-}^H$, and the bifurcating periodic solutions near $\la_{1,+}^H$ or $\la_{1,-}^H$ are spatially nonhomogeneous. Therefore, spatial nonhomogeneous periodic solutions are more likely to occur with
large spatial scale $\ell$.
\end{remark}

\section{The direction and stability of Hopf bifurcation}
From discussions in Section 2, we see that, under certain conditions, model \eqref{nonlocal3} undergoes Hopf bifurcation at $(\la,\la)$ when $\la=\la_{1,+}^H$ or $\la=\la_{1,-}^H$, and the bifurcating periodic solutions are spatially nonhomogeneous.
In this section, we will adopt the framework of Hassard et al. (see Chapter 5 in \cite{hassard1981theory}) to investigate the direction of Hopf bifurcation and the stability of the bifurcating spatially nonhomogeneous periodic solutions.

Setting $\tilde U(x,t)=(\tilde u(x,t),\tilde v(x,t))^T=(u(x,t)-\la,v(x,t)-\la)^T$, and dropping the tilde sign,
system \eqref{nonlocal3} can be transformed as follows:
\begin{equation}\label{ab}
\ds\frac{d U(t)}{dt}=L(\la)U+F(\la,U),
\end{equation}
where
\begin{equation*}
\begin{split}
&L(\la) U=\left(\begin{array}{c}d_1u_{xx}-\f{\beta\la}{\ell\pi}\int_0^{\ell\pi} u(y,t)dy+\frac{\la(1-\beta\la)}{1+\la} u-(1-\beta\la) v\\
d_2  v_{xx}+cu-cv
\end{array}\right),\\
&F(\la, U)\\
=&\left(\begin{array}{c}\frac{1-\beta\la}{1+\la} u+\la(1-\beta\la)-\f{\beta u}{\ell\pi}\int_0^{\ell\pi} u(y,t)dy +(1-\beta\la) v-\frac{b( u+\la)(v+\la)}{ u+\la+1}\\
-cu+c v+c( v+\la)\left(1-\f{ v+\la}{u+\la}\right)
\end{array}\right),
\end{split}
\end{equation*}
for $U=(u,v)^T\in X_{\mathbb C}$.

Letting $\langle \cdot,\cdot\rangle$ be the complex-valued $L^2$ inner product for space $X_{\mathbb C}$, defined by
\begin{equation}\label{in}
\langle U, V\rangle=\int_0^{\ell\pi}\left(\overline u_1 v_1+\overline u_2 v_2\right)dx,
\end{equation}
where $U=(u_1,u_2)^T$ and $V=(v_1,v_2)^T$. Denote the adjoint operator of $L(\la)$ by $L^*(\la)$, which satisfies
$\langle  U, L(\la)V\rangle=\langle L^*(\la)U,  V\rangle$ for any $U,V\in X_{\mathbb C}$. A direct calculation leads to
\begin{equation}\label{L0s}
L^*(\la)\tilde U=\left(\begin{array}{c}d_1\tilde u_{xx}-\f{\beta\la}{\ell\pi}\int_0^{\ell\pi} \tilde u(y,t)dy+\frac{\la(1-\beta\la)}{1+\la} \tilde u+c \tilde v\\
d_2  \tilde v_{xx}-(1-\beta\la)\tilde u-c\tilde v
\end{array}\right),\\
\end{equation}
for $\tilde U=( \tilde u,\tilde v)^T\in X_{\mathbb C}$.
For simplicity of notations, we denote
\begin{equation}\label{la0}
\la_{*}=\la_{1,+}^H\;\text{or}\;\la_{1,-}^H.
\end{equation}
Since the formulas of Hopf bifurcation are all relative to $\la=\la_{*}$ only, we set $\la=\la_{*}$ in
Eq. \eqref{ab} and obtain
\begin{equation}\label{ab0}
\ds\frac{d U(t)}{dt}=L(\la_{*})U+F(\la_{*},U).
\end{equation}
It follows from Section 2 that $L(\la_{*})$ and $L^*(\la_{*})$ has only one pair of purely imaginary eigenvalue $\pm i\omega_{*}$, where
\begin{equation}\label{omega}
\omega_{*}=\sqrt{D_1(\la_{*})},
\end{equation}
and other eigenvalues are all negative. Let
$q$ and  $q^*$ satisfy
$$L(\la_{*})q=iw_{*}q,\;\;L^*(\la_{*})q^*=-i\omega_{*}q^*\;\; \text{and}\;\;\langle q^*,q\rangle=1.$$
Since $L^*(\la_{*})$ is the adjoint operator of $L(\la_{*})$, we see that
\begin{equation*}
\begin{split}
&\langle L^*(\la_*)q^*,\overline q\rangle=\langle -i\omega_*q^*,\overline q\rangle =i\omega_*\langle q^*,\overline q\rangle\\
=&\langle q^*,L(\la_*)\overline q\rangle=\langle q^*,-i\omega_*\overline q\rangle=-i\omega_*\langle q^*,\overline q\rangle,
\end{split}
\end{equation*}
which leads to $\langle q^*,\overline q\rangle=0$.
In fact, we can choose
\begin{equation}\label{qqs}
\begin{split}
q=(q_1,q_2)^T\cos\f{x}{\ell},\;\;q^*=\f{2}{\ell\pi\overline D}(q_1^*,q_2^*)^T\cos\f{x}{\ell},
\end{split}
\end{equation}
where
\begin{equation}\label{D}
\begin{split}
&q_1=q_1^*=1,\;\;q_2=\f{c}{i\omega_{*}+\f{d_2}{\ell^2}+c},\;\;q_2^*=\f{1-\beta\la_*}{i\omega_{*}-\f{d_2}{\ell^2}-c},\\
&D=q_1\overline q_1^*+q_2\overline {q_2^*}=1-\f{c(1-\beta\la)}{\left(i\omega_{*}+\f{d_2}{\ell^2}+c\right)^2}.
\end{split}
\end{equation}
Decompose $X_{\mathbb{C}}=X^c\oplus X^s$, where $X^c=\{zq+\overline {z}{\overline q}:z\in\mathbb C\}$ and
$X^s=\{u\in X_{\mathbb C}:\langle q^*, u\rangle=\langle \overline {q^*}, u\rangle=0\}$.
Here $X^c$ and $X^s$ are the center subspace and stable subspace of system \eqref{ab0} respectively.
Then system \eqref{ab0} in $z$ and $w$ coordinates takes the following form:
\begin{equation}\label{zw}
\begin{cases}
\ds\f{dz}{dt}=\ds\f{d}{dt}\langle q^*,U(t)\rangle
=\langle q^*,
L(\la_{*})U\rangle+\langle q^*, F(\la_{*},U)\rangle
=i\omega_{*}z+\langle q^*, F(\la_{*},U)\rangle,\\
\ds\f{dw}{dt}=L(\la_{*})w+F(\la_{*},U)-\langle q^*, F(\la_{*},U)\rangle q-\langle \overline{q}^*, F(\la_{*},U)\rangle \overline q,
\end{cases}
\end{equation}
where $U=z(t)q+\overline z(t) \overline q+w(t)$.
It follows from \cite{hassard1981theory} that system \eqref{zw} posses a center manifold in the following form:
\begin{equation}\label{center}w(z,\overline
z)=w_{20}\frac{z^{2}}{2}+w_{11}z\overline
z+w_{02}\frac{\overline z^{2}}{2}+O(|z|^3)\end{equation}
with $w_{ij}=(w_{ij}^{(1)},w_{ij}^{(2)})^T$ in $X^s$ for $i+j=2$.
Therefore, the flow of system \eqref{zw} on the center
manifold can be written as:
\begin{equation*}U(t)=z(t)q+\overline z(t) \overline q+w(z(t),\overline z(t)),\end{equation*}
where $z(t)$ satisfies
\begin{equation}\label{z(t)}
\dot{z}(t)
=i\omega_{*}z(t)+g(z,\overline z).
\end{equation}
Here
\begin{equation}\label{gz}
g(z,\overline z)=\langle q^*, F\left(\la_{*},z(t)q+\overline z(t)\overline q+w(z(t),\overline z(t))\right)\rangle\\
=\sum_{2\le i+j\le 3}\f{g_{ij}}{i!j!}z^i\overline z^j+O(|z|^4).
\end{equation}
Note that $\int_0^{\ell\pi}\cos^3\f{x}{\ell}dx=0$ and
\begin{equation}\label{F}
\begin{split}
&F(\la_{*}, U)\\
=&\left(\begin{array}{c}\f{1-\beta\la_{*}}{(\la_{*}+1)^2}u^2-\f{1-\beta\la_{*}}{\la_{*}(\la_{*}+1)}uv-\f{1-\beta\la_{*}}{(1+\la_{*})^3}u^3
+\f{1-\beta\la_{*}}{\la_{*}(\la_{*}+1)^2}u^2v-\f{\beta u}{\ell\pi}\int_0^{\ell\pi}u(y,t)dy\\
-\f{c}{\la_*}u^2-\f{c}{\la_*}v^2+\f{2c}{\la_*}uv+\f{c}{\la_*^2}u^3-\f{2c}{\la_*^2}u^2v+\f{c}{\la_*^2}uv^2
\end{array}\right)\\
+&O(\|U\|^4_{X_{\mathbb C}}),
\end{split}
\end{equation}
where $U=(u,v)^T\in X_{\mathbb C}$ and $\|U\|_{X_{\mathbb C}}^2=\int_0^{\ell\pi}u^2dx+\int_0^{\ell\pi}v^2dx+\int_0^{\ell\pi}u_x^2dx+\int_0^{\ell\pi}v_x^2dx$.
An easy calculation implies that
\begin{equation}\label{g}
\begin{split}
&g_{20}=g_{11}=g_{02}=0,\\
&\ds\f{\ell\pi D}{2}g_{21}\\=&\f{2(1-\beta\la_{*})}{(\la_{*}+1)^2}\int_0^{\ell\pi}\cos^2\f{x}{\ell}\left(2w_{11}^{(1)}+w_{20}^{(1)}\right)dx\\
-&\ds\f{2(1-\beta\la_{*})}{\la_{*}(1+\la_{*})}\int_0^{\ell\pi}\cos^2\f{x}{\ell}\left(w_{11}^{(2)}+
\ds\f{w_{20}^{(2)}}{2}+\overline q_2\ds\f{w_{20}^{(1)}}{2}+q_2w_{11}^{(1)}\right)dx\\
-&\f{6(1-\beta\la_{*})}{(1+\la_{*})^3}\int_0^{\ell\pi}\cos^4\f{x}{\ell}dx
+\ds\f{2(1-\beta\la_{*})}{\la_{*}(1+\la_{*})^2}
\left(\overline q_2+2q_2\right)\int_0^{\ell\pi}\cos^4\f{x}{\ell}dx
\\
-&\ds\f{2\beta}{\ell\pi}\int_0^{\ell\pi}\cos^2\f{x}{\ell}dx\int_0^{\ell\pi}w_{11}^{(1)}dx
-\f{\beta}{\ell\pi}\int_0^{\ell\pi}\cos^2\f{x}{\ell}dx\int_0^{\ell\pi}w_{20}^{(1)}dx\\
-&\f{2c}{\la_{*}}\overline {q_2^*}(1-\overline q_2)\int_0^{\ell\pi}\cos^2\f{x}{\ell}(w_{20}^{(1)}-w_{20}^{(2)})dx\\
-&\f{4c}{\la_{*}}\overline {q_2^*}(1-q_2)\int_0^{\ell\pi}\cos^2\f{x}{\ell}(w_{11}^{(1)}-w_{11}^{(2)})dx
-\f{4c}{\la_{*}^2}\overline {q_2^*}(\overline q_2+2q_2)\int_0^{\ell\pi}\cos^4\f{x}{\ell}dx\\
+&\f{6c}{\la_{*}^2}\overline {q_2^*}\int_0^{\ell\pi}\cos^4\f{x}{\ell}dx
+\f{2c}{\la_{*}^2}\overline {q_2^*}(q_2^2+2q_2\overline q_2)\int_0^{\ell\pi}\cos^4\f{x}{\ell}dx.
\end{split}
\end{equation}
Therefore, to derive the expression of $g_{21}$, one need to compute $w_{20}$
and $w_{11}$.

It follows from system \eqref{zw} that $w(z(t),(z(t))$ satisfies
\begin{equation}\label{3.12}
\begin{split}
\dot w=&L(\la_{*})w+F(\la_{*},zq+\overline z\overline q+w(z,\overline z))\\
-&\langle q^*, F(\la_{*},zq+\overline z\overline q+w(z,\overline z))\rangle q-\langle \overline{q}^*, F(\la_{*},zq+\overline z\overline q+w(z,\overline z))\rangle \overline q\\
=&L(\la_{*})w+h_{20}\ds\frac{z^2}{2}+h_{11}z\overline
z+h_{02}\ds\frac{\overline z^2}{2}+O(|z|^3).
\end{split}
\end{equation}
Then by using the chain rule
\begin{equation*}\dot w=\ds\frac{\partial w(z,\overline z)}{\partial z}\dot z+\ds\frac{\partial w(z,\overline z)}{\partial \overline z}\dot{\overline z},
\end{equation*}
we have
\begin{equation}\label{3.13}
\begin{cases}
w_{20}=[2i\omega_{*}-L(\la_{*})]^{-1}h_{20},\\
w_{11}=-\left[L(\la_{*})\right]^{-1}h_{11}.\\
\end{cases}
\end{equation}
Since $g_{20}=g_{11}=g_{02}=0$, we obtain that
\begin{equation}\label{h}
\begin{split}
h_{20}=&\left(h_{20}^{(1)},h_{20}^{(2)}\right)^T=\left(\gamma_1,\gamma_2\right)^T\cos^2\f{x}{\ell},\\
h_{11}=&\left(h_{11}^{(1)},h_{11}^{(2)}\right)^T=\left(\gamma_3,\gamma_4\right)^T\cos^2\f{x}{\ell},\\
\end{split}
\end{equation}
where
\begin{equation}\label{gamm}
\begin{split}
&\gamma_1=\f{2(1-\beta\la_{*})}{(1+\la_{*})^2}-\f{2(1-\beta\la_{*})}{\la_{*}(1+\la_{*})}q_2,\;\;
\gamma_2=-\f{2c}{\la_{*}}(1-q_2)^2,\;\;\\
&\gamma_3=\f{2(1-\beta\la_{*})}{(1+\la_{*})^2}-\f{(1-\beta\la_{*})}{\la_{*}(1+\la_{*})}(\overline q_2+ q_2),\;\;\gamma_4=-\f{2c}{\la_{*}}(1-q_2)(1-\overline q_2).
\end{split}
\end{equation}
Consequently,
\begin{equation}\label{w}
\begin{split}
w_{20}=&\left(w_{20}^{(1)},w_{20}^{(2)}\right)^T=(a_1,a_2)^T\cos\f{2x}{\ell}+(a_3,a_4)^T,\\
w_{11}=&\left(w_{11}^{(1)},w_{11}^{(2)}\right)^T=(b_1,b_2)^T\cos\f{2x}{\ell}+(b_3,b_4)^T,\\
\end{split}
\end{equation}
where
\begin{equation}\label{abv}
\begin{split}
a_1=&\ds\f{1}{2}\ds\f{\gamma_1\left(2i\omega_*+d_2\f{4}{\ell^2}+c\right)-\gamma_2(1-\beta\la_*)}{-4\omega_*^2-2iT_2(\la_*)\omega_*+D_2(\la_*)},
\;a_3=\ds\f{1}{2}\ds\f{\gamma_1\left(2i\omega_*+c\right)-\gamma_2(1-\beta\la_*)}{-4\omega_*^2-2iT_0(\la_*)\omega_*+D_0(\la_*)},\\
a_2=&\ds\f{1}{2}\ds\f{\gamma_2\left(2i\omega_*+d_1\f{4}{\ell^2}-\f{\la_*(1-\beta\la_*)}{1+\la_*}\right)+c\gamma_1}{-4\omega_*^2-2iT_2(\la_*)\omega_*+D_2(\la_*)},\;
a_4=\ds\f{1}{2}\ds\f{\gamma_2\left(2i\omega_*-\f{\la_*(1-\beta-2\beta\la_*)}{1+\la_*}\right)+c\gamma_1}{-4\omega_*^2-2iT_0(\la_*)\omega_*+D_0(\la_*)},\\
b_1=&\ds\f{1}{2}\ds\f{\gamma_3\left(d_2\f{4}{\ell^2}+c\right)-\gamma_4(1-\beta\la_*)}{D_2(\la_*)},\;\;b_3=\ds\f{1}{2}\ds\f{\gamma_3c-\gamma_4(1-\beta\la_*)}{D_0(\la_*)},\\
b_2=&\ds\f{1}{2}\ds\f{\gamma_4\left(d_1\f{4}{\ell^2}-\f{\la_*(1-\beta\la_*)}{1+\la_*}\right)+c\gamma_3}{D_2(\la_*)},
\;\;b_4=\ds\f{1}{2}\ds\f{\gamma_4\left(-\f{\la_*(1-\beta-2\beta\la_*)}{1+\la_*}\right)+c\gamma_3}{D_0(\la_*)}.\\
\end{split}
\end{equation}
Substituting Eqs. \eqref{w} and \eqref{abv} into the last equation of \eqref{g}, we can compute the value of $g_{21}$ for given parameters.
Then we can compute the following
quantities which determine the direction and stability of
bifurcating spatially nonhomogeneous periodic solutions:
\begin{equation*}
\begin{split}
&C_1(0)=\dfrac{i}{2\omega_{*}}\left(g_{11}g_{20}-2|g_{11}|^2
-\dfrac{|g_{02}|^2}{3}\right)+\dfrac{g_{21}}{2}=\dfrac{g_{21}}{2},\\
&\mu_2=-\dfrac{{\rm Re}(C_1(0))}{{\rm Re}(\mu'(\la_{*}))},\;\;
\beta_2=2{\rm Re}(C_1(0))={\rm Re}(g_{21}).
\end{split}
\end{equation*}
Here
\begin{enumerate}
\item[(1)] $\mu_2$ determines the direction of the Hopf bifurcation. If
$\mu_2>0$ (respectively, $\mu_2<0$), then the bifurcating periodic solutions exist
in the right neighborhood of $\la_{*}$ (respectively, in the left neighborhood of $\la_{*}$);
\item[(2)] $\beta_2$ determines
the stability of bifurcating spatially nonhomogeneous periodic solutions. If $\beta_2<0$ (respectively, $\beta_2>0$), then the bifurcating
periodic solutions are orbitally asymptotically stable (respectively, unstable).
\end{enumerate}

Since the expression of $g_{21}$ is complex, we can only determine the sign of ${\rm Re}(g_{21})$ for given parameters.
From Theorem
\ref{Hopfm}-\ref{Hopfm3}, we see that
if $d_1/d_2>p_1(\la_1)$ and $c<p_2(\la_2)$, then, for sufficiently large $\ell$,
\begin{enumerate}
 \item [(1)]$(\la,\la)$ is locally asymptotically stable for $\la\in\left(\la_{1,+}^H,1/\beta\right)\cup\left(0,\la_{1,-}^H\right)$ and unstable for $\la\in\left(\la_{1,-}^H,\la_{1,+}^H\right)$.
 \item [(2)] system \eqref{nonlocal3} undergoes Hopf bifurcation at $(\la,\la)$ when $\la=\la_{1,+}^H$ or $\la=\la_{1,-}^H$, and the bifurcating periodic solutions near $\la_{1,+}^H$ or $\la_{1,-}^H$ are spatially nonhomogeneous.
\end{enumerate}
In the following, we will consider the sign of ${\rm Re}(g_{21})$ with respect to Hopf bifurcation point $\la_{1,-}^H$ or $\la_{1,+}^H$ for large $\ell$. Firstly, we show the limit of
$\la_{*}$, $\omega_{*}$ $q_2$, $q_2^*$ and $D$ as $\ell\to\infty$ for further application.
\begin{lemma}\label{ll3}
Denote
\begin{equation}\label{3.21}
\begin{split}
&\la_{*}^\infty=\lim_{\ell\to\infty}\la_{*}, \;\;\omega_{*}^\infty=\lim_{\ell\to\infty}\omega_{*}, \\
&q_2^\infty=\lim_{\ell\to\infty}q_2,\;\; q_2^{*\infty}=\lim_{\ell\to\infty}q_2^*, \;\;D^\infty=\lim_{\ell\to\infty} D,
\end{split}
\end{equation}
where $\la_{*}$, $\omega_{*}$ $q_2$, $q_2^*$ and $D$ are defined as in Eqs. \eqref{la0}, \eqref{omega}, \eqref{qqs} and \eqref{D} respectively. Then
\begin{equation}\label{inf}
\begin{split}
&c=\ds\f{\la_{*}^\infty(1-\beta\la_{*}^\infty)}{1+\la_{*}^\infty}, \;\;\left(\omega_{*}^\infty\right)^2=\ds\f{c(1-\beta\la_{*}^\infty)}{1+\la_{*}^\infty},\;\;
q_2^{*\infty}=-1-i\ds\f{\omega_{*}^\infty}{c}, \\
&q_2^\infty=\ds\f{c}{1-\beta\la_{*}^\infty}-i\ds\f{\omega_{*}^\infty}{1-\beta\la_{*}^\infty}
=\ds\f{\la_{*}^\infty}{1+\la_{*}^\infty}-i\ds\f{c}{\omega_{*}^\infty\left(1+\la_{*}^\infty\right)},\\
&D^\infty=\ds\f{2}{1+\la_{*}^\infty}\left(1+i\ds\f{c}{\omega_{*}^\infty}\right)=\f{2}{1-i\f{c}{\omega_{*}^\infty}}.
\end{split}
\end{equation}
\end{lemma}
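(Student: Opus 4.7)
\textbf{Proof proposal for Lemma \ref{ll3}.} The strategy is a direct passage to the limit $\ell \to \infty$ in the defining equations of $\la_*$, $\omega_*$, $q_2$, $q_2^*$ and $D$, followed by simplification using the limiting identities themselves. First, since $\la_* = \la_{1,\pm}^H$ is characterized by $T_1(\la_*) = 0$, i.e.
\begin{equation*}
-c + \frac{\la_*(1-\beta\la_*)}{1+\la_*} - \frac{d_1+d_2}{\ell^2} = 0,
\end{equation*}
letting $\ell \to \infty$ immediately yields $c = \la_*^\infty(1-\beta\la_*^\infty)/(1+\la_*^\infty)$. Similarly, $\omega_*^2 = D_1(\la_*)$ together with the expression of $D_n(\la)$ in Eq. \eqref{TDn} gives $(\omega_*^\infty)^2 = c(1-\beta\la_*^\infty)/(1+\la_*^\infty)$ in the limit. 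These two identities will be the only inputs needed for the rest of the computation; note in particular that together they imply the useful relation $c^2 + (\omega_*^\infty)^2 = c(1-\beta\la_*^\infty)$, which I would record once and reuse throughout.

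Next I would pass to the limit in the formulas of Eq. \eqref{D}. For $q_2^\infty$, taking $\ell \to \infty$ gives $q_2^\infty = c/(i\omega_*^\infty + c)$; rationalizing the denominator and substituting $c^2 + (\omega_*^\infty)^2 = c(1-\beta\la_*^\infty)$ yields $q_2^\infty = c/(1-\beta\la_*^\infty) - i\omega_*^\infty/(1-\beta\la_*^\infty)$. The alternate form in \eqref{inf} then follows by replacing $c/(1-\beta\la_*^\infty) = \la_*^\infty/(1+\la_*^\infty)$ (from the first identity) and replacing $\omega_*^\infty/(1-\beta\la_*^\infty) = c/[\omega_*^\infty(1+\la_*^\infty)]$ (from $(\omega_*^\infty)^2(1+\la_*^\infty) = c(1-\beta\la_*^\infty)$). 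The computation for $q_2^{*\infty}$ is analogous: $q_2^{*\infty} = (1-\beta\la_*^\infty)/(i\omega_*^\infty - c)$, and the same rationalization plus the identity $c^2 + (\omega_*^\infty)^2 = c(1-\beta\la_*^\infty)$ immediately collapses the fraction to $-1 - i\omega_*^\infty/c$.

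Finally, for $D^\infty$, I would take the limit of $D = 1 - c(1-\beta\la_*)/(i\omega_* + d_2/\ell^2 + c)^2$ to obtain $D^\infty = 1 - c(1-\beta\la_*^\infty)/(i\omega_*^\infty + c)^2$. Using $c(1-\beta\la_*^\infty) = (i\omega_*^\infty + c)(-i\omega_*^\infty + c)$, the fraction simplifies and one obtains $D^\infty = 2i\omega_*^\infty/(i\omega_*^\infty + c)$. A second rationalization, combined once more with $(\omega_*^\infty)^2(1+\la_*^\infty) = c(1-\beta\la_*^\infty)$, rewrites this as $\frac{2}{1+\la_*^\infty}\bigl(1 + i c/\omega_*^\infty\bigr)$, and the final form $2/(1 - ic/\omega_*^\infty)$ is an elementary algebraic identity.

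There is no genuine obstacle here: the lemma is a compilation of limiting constants, and the entire proof amounts to (a) passing to the limit in polynomial/rational expressions whose denominators do not vanish (both $i\omega_*^\infty + c$ and $i\omega_*^\infty - c$ have nonzero modulus $\sqrt{c(1-\beta\la_*^\infty)}$), and (b) repeatedly substituting the two core identities derived from $T_1(\la_*) = 0$ and $\omega_*^2 = D_1(\la_*)$. The only point requiring mild care is keeping track of which equivalent form of each quantity is most convenient for subsequent use in Section 3; for this reason I would record both expressions of $q_2^\infty$ and $D^\infty$ explicitly, as stated in \eqref{inf}.
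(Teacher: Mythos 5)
Your proposal is correct and follows essentially the same route as the paper: both pass to the limit in $T_1(\la_*)=0$ and $\omega_*^2=D_1(\la_*)$ and then simplify $q_2$, $q_2^*$ and $D$ by rationalizing with the identity $c^2+(\omega_*^\infty)^2=c(1-\beta\la_*^\infty)$, the only cosmetic difference being that the paper obtains the equivalent finite-$\ell$ identity $\omega_*^2+\left(c+d_2/\ell^2\right)^2=c(1-\beta\la_*)$ from the orthogonality $\langle q^*,\overline q\rangle=0$ and rationalizes before taking limits, whereas you take limits first and derive the identity directly from the two limiting equations. Your explicit check that the limiting denominators $i\omega_*^\infty\pm c$ have nonzero modulus $\sqrt{c(1-\beta\la_*^\infty)}$ is a small point the paper leaves implicit.
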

\begin{proof}
Taking the limit of equation $T_1(\la_*)=0$ and $D_1(\la_*)=\omega_*^2$ as $\ell\to\infty$, we have
$$c=\ds\f{\la_{*}^\infty(1-\beta\la_{*}^\infty)}{1+\la_{*}^\infty}, \;\;\left(\omega_{*}^\infty\right)^2=\ds\f{c(1-\beta\la_{*}^\infty)}{1+\la_{*}^\infty},$$
which implies that
\begin{equation}\label{rela}
\la_*^\infty\left(\omega_{*}^\infty\right)^2=c^2.
\end{equation}
Note that $\langle q^*,\overline q\rangle=0$. Then
$$\left(\omega_*\right)^2+\left(c+\ds\f{d_2}{\ell^2}\right)^2=c(1-\beta\la_{*}),$$
which leads to
\begin{equation*}
\begin{split}
q_2^\infty=&\lim_{\ell\to\infty}\f{c}{i\omega_{*}+\f{d_2}{\ell^2}+c}
=\lim_{\ell\to\infty}\ds\f{c}{c(1-\beta\la_*)}\left(c+\ds\f{d_2}{\ell^2}-i\omega_*\right)\\
=&\ds\f{c}{1-\beta\la_{*}^\infty}-i\ds\f{\omega_{*}^\infty}{1-\beta\la_{*}^\infty}
=\ds\f{\la_{*}^\infty}{1+\la_{*}^\infty}-i\ds\f{c}{\omega_{*}^\infty\left(1+\la_{*}^\infty\right)},\\
q_2^{*\infty}=&\lim_{\ell\to\infty}\f{1-\beta\la_*}{i\omega_{*}-\f{d_2}{\ell^2}-c}
=-\lim_{\ell\to\infty}\ds\f{1-\beta\la_*}{c(1-\beta\la_*)}\left(c+\ds\f{d_2}{\ell^2}+i\omega_*\right)\\
=&-1-i\ds\f{\omega_{*}^\infty}{c}.\\
\end{split}
\end{equation*}
This, combined with Eq. \eqref{rela}, implies that
\begin{equation}
\begin{split}
D^\infty=&\lim_{\ell\to\infty}(q_1\overline q_1^*+q_2\overline {q_2^*})=\ds\f{2}{1+\la_{*}^\infty}
+i\ds\f{1}{1+\la_{*}^\infty}\left(\f{\la_*^\infty\omega_{*}^\infty}{c}+\f{c}{\omega_{*}^\infty}\right)\\
=&\ds\f{2}{1+\la_{*}^\infty}\left(1+i\f{c}{\omega_{*}^\infty}\right)=\f{2}{1-i\f{c}{\omega_{*}^\infty}}.
\end{split}
\end{equation}
\end{proof}
By virtue of Lemma \ref{ll3}, we obtain the limits of $\gamma_i$ as $\ell\to\infty$ for $i=1,2,3,4$.
\begin{lemma}\label{hinf}
Denote
\begin{equation}
\gamma_i^\infty=\lim_{\ell\to\infty}\gamma_i\;\;\text{for}\;\;i=1,2,3,4,
\end{equation}
where $\gamma_i\;(i=1,2,3,4)$ are defined as in Eq. \eqref{gamm}.
Then
\begin{equation}\label{gainf}
\begin{split}
&\gamma_1^\infty=i\ds\f{2\omega_*^\infty}{\la_{*}^\infty\left(1+\la_{*}^\infty\right)},\;\;
\gamma_2^\infty=-\ds\f{2c}{\la_{*}^\infty\left(1+\la_{*}^\infty\right)^2}\left(1-\la_{*}^\infty+2i\ds\f{c}{\omega_*^\infty}\right),\;\;\\
&\gamma_3^\infty=0,\;\;\gamma_4^\infty=-\ds\f{2c}{\la_{*}^\infty\left(1+\la_{*}^\infty\right)},
\end{split}
\end{equation}
where $\la_*^\infty$ and $\omega_*^\infty$ are defined as in Eq. \eqref{3.21}.
\end{lemma}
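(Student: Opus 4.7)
My plan is to prove the lemma by direct substitution: since each $\gamma_i$ depends on $\ell$ only through the continuous quantities $\lambda_*$ and $q_2$ (and $\overline{q_2}$), I can pass the limit inside the rational expressions in \eqref{gamm} and evaluate using the closed-form values of $\lambda_*^\infty$, $\omega_*^\infty$, $q_2^\infty$, $\overline{q_2^\infty}$ supplied by Lemma \ref{ll3}. The computation leans on two identities from Eq. \eqref{inf}, namely
\begin{equation*}
c(1+\lambda_*^\infty)=\lambda_*^\infty(1-\beta\lambda_*^\infty),\qquad c^{2}=\lambda_*^\infty(\omega_*^\infty)^{2},
\end{equation*}
plus the decomposition $q_2^\infty=\tfrac{\lambda_*^\infty}{1+\lambda_*^\infty}-i\tfrac{c}{\omega_*^\infty(1+\lambda_*^\infty)}$, which immediately gives $1-q_2^\infty=\tfrac{1}{1+\lambda_*^\infty}\bigl(1+i\tfrac{c}{\omega_*^\infty}\bigr)$.

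For $\gamma_3^\infty$, I would notice that $q_2^\infty+\overline{q_2^\infty}=2\,\mathrm{Re}(q_2^\infty)=\tfrac{2\lambda_*^\infty}{1+\lambda_*^\infty}$, so the two terms of $\gamma_3$ each collapse to $\tfrac{2(1-\beta\lambda_*^\infty)}{(1+\lambda_*^\infty)^2}$ and cancel, yielding $\gamma_3^\infty=0$. For $\gamma_1^\infty$, I would replace $1-\beta\lambda_*^\infty$ by $c(1+\lambda_*^\infty)/\lambda_*^\infty$, reducing $\gamma_1^\infty$ to $\tfrac{2c}{\lambda_*^\infty(1+\lambda_*^\infty)}-\tfrac{2c}{(\lambda_*^\infty)^2}q_2^\infty$, and then observe that the real parts cancel while the imaginary part uses $c^2/(\omega_*^\infty)^2=\lambda_*^\infty$ to produce the clean form $\tfrac{2i\omega_*^\infty}{\lambda_*^\infty(1+\lambda_*^\infty)}$. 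For $\gamma_2^\infty$ I would square $1-q_2^\infty$ using the factorization above: $(1-q_2^\infty)^2=\tfrac{1}{(1+\lambda_*^\infty)^2}\bigl(1+i\tfrac{c}{\omega_*^\infty}\bigr)^{2}=\tfrac{1}{(1+\lambda_*^\infty)^2}\bigl(1-\lambda_*^\infty+2i\tfrac{c}{\omega_*^\infty}\bigr)$, which matches the stated expression after multiplying by $-2c/\lambda_*^\infty$. For $\gamma_4^\infty$, the factor is $(1-q_2^\infty)(1-\overline{q_2^\infty})=|1-q_2^\infty|^{2}=\tfrac{1}{(1+\lambda_*^\infty)^2}\bigl(1+\tfrac{c^{2}}{(\omega_*^\infty)^{2}}\bigr)=\tfrac{1}{1+\lambda_*^\infty}$ after applying $c^2=\lambda_*^\infty(\omega_*^\infty)^{2}$, giving $\gamma_4^\infty=-\tfrac{2c}{\lambda_*^\infty(1+\lambda_*^\infty)}$.

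There is no real obstacle: every step is an elementary algebraic simplification that becomes tractable once the two identities above are used systematically to eliminate either $1-\beta\lambda_*^\infty$ or $c^{2}/(\omega_*^\infty)^{2}$. The only mild care is in $\gamma_1^\infty$ and $\gamma_2^\infty$, where the imaginary parts must be tracked correctly; the easiest route is to compute $1-q_2^\infty$ once and reuse it, rather than expanding everything in $c$ and $\omega_*^\infty$ from scratch. Hence I would structure the proof as (i) pass to the limit by continuity, (ii) record the working expressions for $q_2^\infty$, $1-q_2^\infty$, $q_2^\infty+\overline{q_2^\infty}$ and $|1-q_2^\infty|^{2}$, and (iii) substitute term by term into \eqref{gamm} to read off \eqref{gainf}.
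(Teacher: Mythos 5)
Your proposal is correct and follows essentially the same route as the paper's proof: both pass to the limit using the values from Lemma \ref{ll3}, exploit the identities $c(1+\la_*^\infty)=\la_*^\infty(1-\beta\la_*^\infty)$ and $c^2=\la_*^\infty(\omega_*^\infty)^2$, and compute $\gamma_2^\infty$, $\gamma_4^\infty$ via $(1-q_2^\infty)^2$ and $|1-q_2^\infty|^2$ with $1-q_2^\infty=\frac{1}{1+\la_*^\infty}\left(1+i\frac{c}{\omega_*^\infty}\right)$. Your organizational choice of eliminating $1-\beta\la_*^\infty$ first in $\gamma_1$ is only a cosmetic variation on the paper's direct substitution of $q_2^\infty=\frac{\la_*^\infty}{1+\la_*^\infty}-i\frac{\omega_*^\infty}{1-\beta\la_*^\infty}$.
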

\begin{proof}
Noticing that $\la_*^\infty\left(\omega_{*}^\infty\right)^2=c^2$ from Eq. \eqref{rela}, by virtue of Eq. \eqref{inf}, we see that
\begin{equation*}
\begin{split}
\gamma_1^\infty=&\f{2(1-\beta\la_{*}^\infty)}{\left(1+\la_{*}^\infty\right)^2}-\f{2(1-\beta\la_{*}^\infty)}{\la_{*}^\infty\left(1+\la_{*}^\infty\right)}\left(\ds\f{\la_{*}^\infty}{1+\la_{*}^\infty}-i\ds\f{\omega_{*}^\infty}{1-\beta\la_{*}^\infty}\right)\\
=&\ds\f{2i\omega_*^\infty}{\la_{*}^\infty\left(1+\la_{*}^\infty\right)},\\
\gamma_2^\infty=&-\ds\f{2c}{\la_{*}^\infty}\left[\ds\f{1}{1+\la_{*}^\infty}+i\ds\f{c}{\omega_{*}^\infty\left(1+\la_{*}^\infty\right)}\right]^2
=-\ds\f{2c}{\la_{*}^\infty\left(1+\la_{*}^\infty\right)^2}\left(1+i\ds\f{c}{\omega_{*}^\infty}\right)^2\\
=&-\ds\f{2c}{\la_{*}^\infty\left(1+\la_{*}^\infty\right)^2}\left(1-\la_{*}^\infty+2i\ds\f{c}{\omega_*^\infty}\right),\\
\end{split}
\end{equation*}
\begin{equation*}
\begin{split}
\gamma_3^\infty=&\f{2(1-\beta\la_{*}^\infty)}{\left(1+\la_{*}^\infty\right)^2}-\f{(1-\beta\la_{*}^\infty)}{\la_{*}^\infty\left(1+\la_{*}^\infty\right)}\ds\f{2\la_{*}^\infty}{1+\la_{*}^\infty}=0,\\
\gamma_4^\infty=&-\f{2c}{\la_{*}^\infty}\left|\ds\f{1}{1+\la_{*}^\infty}+i\ds\f{c}{\omega_{*}^\infty\left(1+\la_{*}^\infty\right)}\right|^2=-\ds\f{2c}{\la_{*}^\infty\left(1+\la_{*}^\infty\right)}.
\end{split}
\end{equation*}
This completes the proof.
\end{proof}
Then, we estimate the limits of $a_i$ and $b_i$ as $\ell\to\infty$ for $i=1,2,3,4$.
\begin{lemma}\label{333}
Denote
\begin{equation}
a_i^\infty=\lim_{\ell\to\infty}a_i\;\;\text{and}\;\;b_i^\infty=\lim_{\ell\to\infty}b_i\;\;\text{for}\;\;i=1,2,3,4,
\end{equation}
where $a_i,\;b_i\;(i=1,2,3,4)$ are defined as in Eq. \eqref{abv}.
Then
\begin{equation}\label{aibi}
\begin{split}
a_1^\infty=&\ds\f{1}{3\la_*^\infty}-\ds\f{ci}{\la_*^\infty\omega_*^\infty\left(1+\la_{*}^\infty\right)},\;\;
a_2^\infty=\ds\f{\la_*^\infty-5}{3\left(1+\la_{*}^\infty\right)^2}-\ds\f{ic(5\la_*^\infty-1)}{3\la_{*}^\infty\omega_*^\infty\left(1+\la_{*}^\infty\right)^2},\\
a_3^\infty=&\ds\f{-3\left(\omega_*^\infty\right)^2a_1^\infty}{-3\left(\omega_*^\infty\right)^2+\beta c\la_*^\infty+2i\beta\la_*^\infty\omega_*^\infty},\;\;
a_4^\infty=\ds\f{-3\left(\omega_*^\infty\right)^2a_2^\infty+\f{\beta\la_*^\infty r_2^\infty}{2}}{-3\left(\omega_*^\infty\right)^2+\beta c\la_*^\infty+2i\beta\la_*^\infty\omega_*^\infty},\\
b_1^\infty=&\f{1}{\la_*^\infty},\;\;b_2^\infty=\f{1}{1+\la_*^\infty},\;\;
b_3^\infty=\f{c^2}{\left(\la_*^\infty\right)^2\left[\left(\omega_*^\infty\right)^2+\beta c\la_*^\infty\right]},\\
b_4^\infty=&\f{c\left(c-\beta\la_*^\infty\right)}{\la_*^\infty\left(1+\la_{*}^\infty\right)\left[\left(\omega_*^\infty\right)^2+\beta c\la_*^\infty\right]}.
\end{split}
\end{equation}
\end{lemma}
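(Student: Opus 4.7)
The plan is to take $\ell\to\infty$ directly in the explicit formulas \eqref{abv} for $a_i$ and $b_i$, and to simplify the resulting expressions using the limit relations already established in Lemmas \ref{ll3}--\ref{hinf}, together with the basic identities $\la_{*}^{\infty}(\omega_{*}^{\infty})^{2}=c^{2}$ (from Eq.~\eqref{rela}) and $c(1-\beta\la_{*}^{\infty})=(\omega_{*}^{\infty})^{2}(1+\la_{*}^{\infty})$ (which is the equation for $(\omega_{*}^{\infty})^{2}$ in \eqref{inf}).

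The first step is to work out the asymptotics of the mode-$0$ and mode-$2$ coefficients appearing in the denominators. From $T_{1}(\la_{*})=0$ we get $p_{2}(\la_{*})-c=(d_{1}+d_{2})/\ell^{2}$, hence
$$T_{2}(\la_{*})=T_{1}(\la_{*})-\frac{3(d_{1}+d_{2})}{\ell^{2}}=-\frac{3(d_{1}+d_{2})}{\ell^{2}}\longrightarrow 0,$$
and from the identity $p_{3}(\la)=p_{2}(\la)-\beta\la$ (checked directly from \eqref{cla}),
$$T_{0}(\la_{*})\longrightarrow -c+p_{3}(\la_{*}^{\infty})=-c+p_{2}(\la_{*}^{\infty})-\beta\la_{*}^{\infty}=-\beta\la_{*}^{\infty}.$$
Likewise $D_{2}(\la_{*})\to(\omega_{*}^{\infty})^{2}$ and $D_{0}(\la_{*})\to\beta c\la_{*}^{\infty}+(\omega_{*}^{\infty})^{2}$. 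Consequently $-4\omega_{*}^{2}-2iT_{2}(\la_{*})\omega_{*}+D_{2}(\la_{*})\to-3(\omega_{*}^{\infty})^{2}$ and $-4\omega_{*}^{2}-2iT_{0}(\la_{*})\omega_{*}+D_{0}(\la_{*})\to-3(\omega_{*}^{\infty})^{2}+\beta c\la_{*}^{\infty}+2i\beta\la_{*}^{\infty}\omega_{*}^{\infty}$, which match exactly the denominators in \eqref{aibi}.

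The second step handles the numerators. Because the terms $d_{i}\cdot 4/\ell^{2}$ disappear in the limit, the numerators of $a_{1}$ and $a_{3}$ (respectively $b_{1}$ and $b_{3}$) have a common limit, so $a_{3}^{\infty}$ (resp.\ $b_{3}^{\infty}$) is $a_{1}^{\infty}$ (resp.\ $b_{1}^{\infty}$) multiplied by the ratio of the mode-$2$ denominator to the mode-$0$ denominator, which produces the stated expressions. For $a_{4}$ versus $a_{2}$ the additional contribution comes from $-p_{3}(\la_{*})-(-p_{2}(\la_{*}))=\beta\la_{*}$ in the bracket multiplying $\gamma_{2}$, which is precisely the extra $\tfrac{1}{2}\beta\la_{*}^{\infty}\gamma_{2}^{\infty}$ appearing in the numerator of $a_{4}^{\infty}$.

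The final step is the explicit simplification of $a_{1}^{\infty}$, $a_{2}^{\infty}$, $b_{1}^{\infty}$, $b_{2}^{\infty}$. For instance, combining $\gamma_{1}^{\infty}(2i\omega_{*}^{\infty}+c)$ with $-\gamma_{2}^{\infty}(1-\beta\la_{*}^{\infty})$, and using $c(1-\beta\la_{*}^{\infty})=(\omega_{*}^{\infty})^{2}(1+\la_{*}^{\infty})$ to eliminate the $(1-\beta\la_{*}^{\infty})$ factors, collapses the quotient to the compact form $\tfrac{1}{3\la_{*}^{\infty}}-\tfrac{ic}{\la_{*}^{\infty}\omega_{*}^{\infty}(1+\la_{*}^{\infty})}$. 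An analogous but lengthier reduction yields $a_{2}^{\infty}$. The computations for $b_{1}^{\infty}$ and $b_{2}^{\infty}$ are shorter, since $\gamma_{3}^{\infty}=0$ kills several terms. The main obstacle is purely algebraic bookkeeping: the numerators are complex-valued expressions mixing $c$, $\omega_{*}^{\infty}$ and $\la_{*}^{\infty}$, and one must consistently apply $\la_{*}^{\infty}(\omega_{*}^{\infty})^{2}=c^{2}$ to clear denominators and collect real and imaginary parts into the exact shapes listed in \eqref{aibi}.
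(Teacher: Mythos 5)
Your proposal is correct and follows essentially the same route as the paper's own proof: pass to the limit in the explicit formulas \eqref{abv}, using $\lim_{\ell\to\infty}T_2(\la_*)=0$, $\lim_{\ell\to\infty}D_2(\la_*)=\left(\omega_*^\infty\right)^2$, $\lim_{\ell\to\infty}T_0(\la_*)=-\beta\la_*^\infty$, $\lim_{\ell\to\infty}D_0(\la_*)=\left(\omega_*^\infty\right)^2+\beta c\la_*^\infty$, together with the identities $\la_*^\infty\left(\omega_*^\infty\right)^2=c^2$ and $1-\beta\la_*^\infty=c(1+\la_*^\infty)/\la_*^\infty$ and the limits $\gamma_i^\infty$ from Lemma \ref{hinf}. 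Your structural observations (the exact cancellation $T_2(\la_*)=-3(d_1+d_2)/\ell^2$, the identity $p_3(\la)=p_2(\la)-\beta\la$ explaining the extra term $\tfrac{1}{2}\beta\la_*^\infty\gamma_2^\infty$ in $a_4^\infty$, which correctly identifies the paper's $r_2^\infty$ as a typo for $\gamma_2^\infty$) are consistent with, and slightly more explicit than, the paper's computation.
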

\begin{proof}
Since $\lim_{\ell\to\infty}D_2(\la_*)=\left(\omega_*^\infty\right)^2$ and $\lim_{\ell\to\infty}T_2(\la_*)=0$,
we have
\begin{equation}\label{a1}
a_1^\infty=-\ds\f{1}{6\left(\omega_*^\infty\right)^2}\left[\gamma_1^\infty(2i\omega_*^\infty+c)-\gamma_2^\infty(1-\beta\la_*^\infty)\right].
\end{equation}
It follows from Eqs. \eqref{inf} that
\begin{equation}\label{cb}
1-\beta_*^\infty=\f{c(1+\la_*^\infty)}{\la_*^\infty} \;\;\text{and}\;\; \la(\omega_*^\infty)^2=c^2.
\end{equation}
Then substituting $\gamma_1^\infty$ and $\gamma_2^\infty$ into Eq. \eqref{a1},
we see that
\begin{equation*}
\begin{split}
a_1^\infty=&-\ds\f{1}{6\left(\omega_*^\infty\right)^2}\left[\ds\f{2i\omega_*^\infty(2i\omega_*^\infty+c)}{\la_{*}^\infty\left(1+\la_{*}^\infty\right)}
+\ds\f{2c^2}{\left(\la_{*}^\infty\right)^2\left(1+\la_{*}^\infty\right)}\left(1-\la_{*}^\infty+2i\ds\f{c}{\omega_*^\infty}\right)\right]\\
=&\ds\f{1}{3\la_*^\infty}-\ds\f{ci}{\la_*^\infty\omega_*^\infty\left(1+\la_{*}^\infty\right)}.
\end{split}
\end{equation*}
Similarly, we have
\begin{equation*}
\begin{split}
a_2^\infty=&-\ds\f{1}{6\left(\omega_*^\infty\right)^2}\left[-\ds\f{2c(2i\omega_*^\infty-c)}{\la_{*}^\infty\left(1+\la_{*}^\infty\right)^2}\left(1-\la_{*}^\infty+2i\ds\f{c}{\omega_*^\infty}\right)+\ds\f{2ic\omega_*^\infty}{\la_{*}^\infty\left(1+\la_{*}^\infty\right)}\right]\\
=&\ds\f{\la_*^\infty-5}{3\left(1+\la_{*}^\infty\right)^2}-\ds\f{ic(5\la_*^\infty-1)}{3\la_{*}^\infty\omega_*^\infty\left(1+\la_{*}^\infty\right)^2}.
\end{split}
\end{equation*}
Note that $\lim_{\ell\to\infty}T_0(\la_*)=T_0(\la_*^\infty)=-\beta\la_*^\infty$ and $\lim_{\ell\to\infty}D_0(\la_*)=D_0(\la_*^\infty)=\left(\omega_*^\infty\right)^2+\beta c\la_*^\infty$. It follows that
\begin{equation}
\begin{split}
a_3^\infty=&\ds\f{-3\left(\omega_*^\infty\right)^2a_1^\infty}{-3\left(\omega_*^\infty\right)^2+\beta c\la_*^\infty+2i\beta\la_*^\infty\omega_*^\infty},\\
a_4^\infty=&\ds\f{-3\left(\omega_*^\infty\right)^2a_2^\infty+\f{\beta\la_*^\infty r_2^\infty}{2}}{-3\left(\omega_*^\infty\right)^2+\beta c\la_*^\infty+2i\beta\la_*^\infty\omega_*^\infty}.\\
\end{split}
\end{equation}
Then we consider the limits of $b_i$ as $\ell\to\infty$ for $i=1,2,3,4$.
Noticing that $\gamma_3^\infty=0$, by virtue of
Eq. \eqref{cb}, we have
\begin{equation}
\begin{split}
b_1^\infty=&\ds\f{1}{2\left(\omega_*^\infty\right)^2}\ds\f{2c}{\la_{*}^\infty\left(1+\la_{*}^\infty\right)}
\f{c\left(1+\la_*^\infty\right)}{\la_*^\infty}=\f{1}{\la_*^\infty},\\
b_2^\infty=&\ds\f{c}{2\left(\omega_*^\infty\right)^2}\ds\f{2c}{\la_{*}^\infty\left(1+\la_{*}^\infty\right)}=\f{1}{1+\la_*^\infty},\\
b_3^\infty=&\ds\f{1}{2\left[\left(\omega_*^\infty\right)^2+\beta c\la_*^\infty\right]}\ds\f{2c}{\la_{*}^\infty\left(1+\la_{*}^\infty\right)}
\f{c\left(1+\la_*^\infty\right)}{\la_*^\infty}=\f{c^2}{\left(\la_*^\infty\right)^2\left[\left(\omega_*^\infty\right)^2+\beta c\la_*^\infty\right]},\\
b_4^\infty=&\ds\f{c-\beta\la_*^\infty}{2\left[\left(\omega_*^\infty\right)^2+\beta c\la_*^\infty\right]}\ds\f{2c}{\la_{*}^\infty\left(1+\la_{*}^\infty\right)}=\f{c\left(c-\beta\la_*^\infty\right)}{\la_*^\infty\left(1+\la_{*}^\infty\right)\left[\left(\omega_*^\infty\right)^2+\beta c\la_*^\infty\right]}.\\
\end{split}
\end{equation}
\end{proof}

Now, for simplicity of notations, let $A_1$, $A_2$ and $A_3$ satisfy
\begin{equation}\label{Ai1}
\begin{split}
\ds\f{\ell\pi D}{2}A_1=&
\ds\f{2(1-\beta\la_{*})}{\la_{*}(1+\la_{*})}\int_0^{\ell\pi}\cos^2\f{x}{\ell}\left(\ds\f{2\la_*}{1+\la_*}w_{11}^{(1)}-w_{11}^{(2)}-q_2w_{11}^{(1)}\right)dx\\
-&\ds\f{2\beta}{\ell\pi}\int_0^{\ell\pi}\cos^2\f{x}{\ell}dx\int_0^{\ell\pi}w_{11}^{(1)}dx
-\f{4c}{\la_{*}}\overline {q_2^*}(1-q_2)\int_0^{\ell\pi}\cos^2\f{x}{\ell}(w_{11}^{(1)}-w_{11}^{(2)})dx,
\end{split}
\end{equation}
\begin{equation}\label{Ai2}
\begin{split}
\ds\f{\ell\pi D}{2}A_2
=&\left[-\f{6(1-\beta\la_{*})}{(1+\la_{*})^3}+\ds\f{2(1-\beta\la_{*})}{\la_{*}(1+\la_{*})^2}
\left(\overline q_2+2q_2\right)\right]\int_0^{\ell\pi}\cos^4\f{x}{\ell}dx\\
+&\left[-\f{4c}{\la_{*}^2}\overline {q_2^*}(\overline q_2+2q_2)+\f{6c}{\la_{*}^2}\overline {q_2^*}+\f{2c}{\la_{*}^2}\overline {q_2^*}(q_2^2+2q_2\overline q_2)\right]\int_0^{\ell\pi}\cos^4\f{x}{\ell}dx,
\end{split}
\end{equation}
and
\begin{equation}\label{Ai3}
\begin{split}
\ds\f{\ell\pi D}{2}A_3
=&\ds\f{2(1-\beta\la_{*})}{\la_{*}(1+\la_{*})}\int_0^{\ell\pi}\cos^2\f{x}{\ell}\left(
\ds\f{\la_*w_{20}^{(1)}}{1+\la_*}-\ds\f{w_{20}^{(2)}}{2}-\overline q_2\ds\f{w_{20}^{(1)}}{2}\right)dx\\
-&\f{\beta}{\ell\pi}\int_0^{\ell\pi}\cos^2\f{x}{\ell}dx\int_0^{\ell\pi}w_{20}^{(1)}dx
-\f{2c}{\la_{*}}\overline {q_2^*}(1-\overline q_2)\int_0^{\ell\pi}\cos^2\f{x}{\ell}(w_{20}^{(1)}-w_{20}^{(2)})dx.\\
\end{split}
\end{equation}
Then $g_{21}=A_1+A_2+A_3$. Based on Lemmas \ref{ll3}-\ref{333}, we first give the estimate of $A_1$ as $\ell\to\infty$.
\begin{lemma}\label{esta1}
Denote $A_1^\infty=\lim_{\ell\to\infty} A_1$, where $A_1$ is defined as in Eq. \eqref{Ai1}. Then
\begin{equation*}
{\rm Re}(A_1^\infty)=\ds\f{7c}{2\left(\la_*^\infty\right)^2(1+\la_*^\infty)}
+\ds\f{c\left[\left(\omega_*^\infty\right)^2-\beta c\la_*^\infty\right]}{\left(\la_*^\infty\right)^2(1+\la_*^\infty)\left[\left(\omega_*^\infty\right)^2+\beta c\la_*^\infty\right]}.
\end{equation*}
\end{lemma}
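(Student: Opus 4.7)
The strategy is a direct computation: substitute the explicit cosine expansions of $w_{11}$ from \eqref{w} into the right-hand side of \eqref{Ai1}, evaluate all trigonometric integrals in closed form so that the $\ell$-dependence factors out as an overall $\ell\pi$, cancel this factor against the $\ell\pi D/2$ on the left, and then pass to $\ell\to\infty$ using Lemmas \ref{ll3}--\ref{333}.

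Concretely, I would first record the three elementary integrals
\[
\int_0^{\ell\pi}\cos^2\tfrac{x}{\ell}\,dx = \tfrac{\ell\pi}{2}, \qquad \int_0^{\ell\pi}\cos^2\tfrac{x}{\ell}\cos\tfrac{2x}{\ell}\,dx = \tfrac{\ell\pi}{4}, \qquad \int_0^{\ell\pi}\cos\tfrac{2x}{\ell}\,dx = 0,
\]
and substitute $w_{11}^{(1)} = b_1\cos(2x/\ell)+b_3$, $w_{11}^{(2)} = b_2\cos(2x/\ell)+b_4$. This reduces every integral on the right of \eqref{Ai1} to $\ell\pi$ times a combination of $b_1,\dots,b_4$, $q_2$, $\overline{q_2^*}$, $\la_*$; dividing through by $\ell\pi D/2$ therefore eliminates the $\ell$-prefactor and yields the closed-form expression
\[
A_1 = \f{2}{D}\Big\{\tfrac{2(1-\beta\la_*)}{\la_*(1+\la_*)}\Big[\big(\tfrac{2\la_*}{1+\la_*}-q_2\big)\big(\tfrac{b_1}{4}+\tfrac{b_3}{2}\big) - \big(\tfrac{b_2}{4}+\tfrac{b_4}{2}\big)\Big] - \beta b_3 - \tfrac{4c}{\la_*}\overline{q_2^*}(1-q_2)\Big[\big(\tfrac{b_1}{4}+\tfrac{b_3}{2}\big) - \big(\tfrac{b_2}{4}+\tfrac{b_4}{2}\big)\Big]\Big\}.
\]

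Next I would pass to the limit using Lemma \ref{ll3} for $\la_*^\infty,\omega_*^\infty,q_2^\infty,\overline{q_2^*}^\infty,D^\infty$ and Lemma \ref{333} for $b_i^\infty$. A convenient preparatory identity is
\[
\f{2}{D^\infty} = 1 - i\f{c}{\omega_*^\infty},
\]
which follows from the formula for $D^\infty$ in \eqref{inf} together with $\la_*^\infty(\omega_*^\infty)^2 = c^2$; this cleanly separates the real and imaginary parts of the overall prefactor. The auxiliary identities $(1-\beta\la_*^\infty)\la_*^\infty = c(1+\la_*^\infty)$ and $\la_*^\infty(\omega_*^\infty)^2 = c^2$ then permit systematic simplification of the bracket to a purely algebraic expression in $\la_*^\infty$, $\omega_*^\infty$, $c$, $\beta$.

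Finally, I would extract $\mathrm{Re}(A_1^\infty)$ by grouping the contributions according to their source. The pair $(b_1^\infty,b_2^\infty) = (1/\la_*^\infty,\,1/(1+\la_*^\infty))$ should produce the first summand $7c/[2(\la_*^\infty)^2(1+\la_*^\infty)]$, while the pair $(b_3^\infty,b_4^\infty)$ combined with the separate $-\beta b_3$ term should yield the second summand $c[(\omega_*^\infty)^2 - \beta c\la_*^\infty]/\{(\la_*^\infty)^2(1+\la_*^\infty)[(\omega_*^\infty)^2 + \beta c\la_*^\infty]\}$. The main obstacle is purely the algebraic bookkeeping at this last stage: the bracket carries nontrivial imaginary contributions coming from $q_2^\infty$ and $\overline{q_2^*}^\infty$, so multiplying by $1 - ic/\omega_*^\infty$ and taking real parts forces several cross-cancellations that must be tracked carefully before the answer collapses to the stated two-term form. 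Organizing the computation by the source of each $b_i^\infty$ and repeatedly invoking $\la_*^\infty(\omega_*^\infty)^2 = c^2$ should make these cancellations transparent.
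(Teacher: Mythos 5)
Your proposal is correct and follows the paper's own proof essentially verbatim: the paper records the same trigonometric integrals (Eq.~\eqref{ine}), derives exactly your closed-form expression for $A_1$ (Eq.~\eqref{above}, which coincides with yours after regrouping by the factor $2/D$), and then passes to the limit $\ell\to\infty$ using Eq.~\eqref{cb} together with Lemmas \ref{ll3} and \ref{333}. One correction to your anticipated bookkeeping: carrying out the limit shows that the $(b_1^\infty,b_2^\infty)$ terms contribute only $\f{3c}{2(\la_*^\infty)^2(1+\la_*^\infty)}$ to ${\rm Re}(A_1^\infty)$, while the $(b_3^\infty,b_4^\infty)$ group together with the $-\beta b_3$ term contributes the remaining $\f{2c}{(\la_*^\infty)^2(1+\la_*^\infty)}$ in addition to the entire second summand (here the identity $b_3^\infty-b_4^\infty=\f{1}{\la_*^\infty(1+\la_*^\infty)}$, which follows from $\la_*^\infty(\omega_*^\infty)^2=c^2$, is useful), so the split of the coefficient $\f{7c}{2}$ is not the one you predicted, although the total agrees with the lemma.
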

\begin{proof}
Note that
\begin{equation}\label{ine}
\int_0^{\ell\pi}\cos^2\ds\f{x}{\ell}\cos\ds\f{2x}{\ell}dx=\ds\f{\ell\pi}{4},
\;\;\int_0^{\ell\pi}\cos^2\ds\f{x}{\ell}dx=\ds\f{\ell\pi}{2},\;\;\int_0^{\ell\pi}\cos^4\ds\f{x}{\ell}dx=\ds\f{3\ell\pi}{8}.
\end{equation}
It follows from Eq. \eqref{Ai1} that
\begin{equation}\label{above}
\begin{split}
A_1=&\ds\f{(1-\beta\la_{*})}{D\la_{*}(1+\la_{*})}\left(\ds\f{2\la_*}{1+\la_*}b_1-b_2-q_2b_1\right)
+\ds\f{2(1-\beta\la_{*})}{D\la_{*}(1+\la_{*})}\left(\ds\f{2\la_*}{1+\la_*}b_3-b_4-q_2b_3\right)\\
-&\ds\f{2\beta b_3}{D}-\f{2c}{D\la_{*}}\overline {q_2^*}(1-q_2)(b_1-b_2)-\f{4c}{D\la_{*}}\overline {q_2^*}(1-q_2)(b_3-b_4).
\end{split}
\end{equation}
By virtue of Eq. \eqref{cb}, and taking the limits at both sides of Eq. \eqref{above} as $\ell\to\infty$, we see that
\begin{equation*}
\begin{split}
&{\rm Re}(A_1^\infty)\\=&\ds\f{7c}{2\left(\la_*^\infty\right)^2(1+\la_*^\infty)}
+\ds\f{c\left[\left(\omega_*^\infty\right)^2+\beta c\right]}{\left(\la_*^\infty\right)^2(1+\la_*^\infty)\left[\left(\omega_*^\infty\right)^2+\beta c\la_*^\infty\right]}
-\f{\beta c^2}{\left(\la_*^\infty\right)^2\left[\left(\omega_*^\infty\right)^2+\beta c\la_*^\infty\right]}\\
=&\ds\f{7c}{2\left(\la_*^\infty\right)^2(1+\la_*^\infty)}
+\ds\f{c\left[\left(\omega_*^\infty\right)^2-\beta c\la_*^\infty\right]}{\left(\la_*^\infty\right)^2(1+\la_*^\infty)\left[\left(\omega_*^\infty\right)^2+\beta c\la_*^\infty\right]}.
\end{split}
\end{equation*}
\end{proof}
Similarly, we obtain the estimate of $A_2$ as $\ell\to\infty$.
\begin{lemma}\label{esta2}
Denote $A_2^\infty=\lim_{\ell\to\infty} A_2$, where $A_2$ is defined as in Eq. \eqref{Ai2}. Then
\begin{equation*}
{\rm Re}(A_2^\infty)=-\ds\f{3c}{4\la_*^\infty(1+\la_*^\infty)^2}-\ds\f{3c}{2\left(\la_*^\infty\right)^2(1+\la_*^\infty)}.
\end{equation*}
\end{lemma}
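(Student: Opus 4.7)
The plan is to follow exactly the template of the proof of Lemma \ref{esta1}. The quantity $A_2$ is a finite sum of terms each of the form (constant depending on $\lambda_*$, $q_2$, $\overline{q_2}$, $\overline{q_2^*}$) times $\int_0^{\ell\pi}\cos^4(x/\ell)\,dx$; there are no convolutions with $w_{ij}$ in this piece, so $A_2$ is purely algebraic and does not require the coefficients $a_i^\infty$, $b_i^\infty$ from Lemma \ref{333}. This makes the computation cleaner than the one for $A_1$.

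The first step is to evaluate the integrals via Eq. \eqref{ine}. Since $\int_0^{\ell\pi}\cos^4(x/\ell)\,dx=3\ell\pi/8$, dividing Eq. \eqref{Ai2} by $\ell\pi/2$ yields
\begin{equation*}
\begin{split}
D\cdot A_2=\frac{3}{4}\Bigl\{&-\frac{6(1-\beta\lambda_*)}{(1+\lambda_*)^3}+\frac{2(1-\beta\lambda_*)}{\lambda_*(1+\lambda_*)^2}(\overline{q_2}+2q_2)\\
&+\frac{2c}{\lambda_*^2}\overline{q_2^*}\bigl[3-2(\overline{q_2}+2q_2)+q_2^2+2q_2\overline{q_2}\bigr]\Bigr\}.
\end{split}
\end{equation*}

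The second step is to pass to the limit $\ell\to\infty$. By Lemma \ref{ll3} one has explicit formulas for $q_2^\infty$, $q_2^{*\infty}$, and $D^\infty=2/(1-ic/\omega_*^\infty)$, while Eq. \eqref{cb} gives $1-\beta\lambda_*^\infty=c(1+\lambda_*^\infty)/\lambda_*^\infty$ and $\lambda_*^\infty(\omega_*^\infty)^2=c^2$. Substituting these identities into the right-hand side produces a rational expression in $\lambda_*^\infty$, $\omega_*^\infty$, and $c$; dividing by $D^\infty$ (equivalently, multiplying by $(1-ic/\omega_*^\infty)/2$) gives a closed-form expression for $A_2^\infty$.

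The third step is to extract the real part. The anticipated obstacle is purely one of bookkeeping: the bracket $3-2(\overline{q_2}+2q_2)+q_2^2+2q_2\overline{q_2}$ mixes $q_2^2$ and $|q_2|^2$, so before taking the real part I would first simplify using $|q_2^\infty|^2=\lambda_*^\infty/(1+\lambda_*^\infty)$ (which follows directly from the formula for $q_2^\infty$ together with $\lambda_*^\infty(\omega_*^\infty)^2=c^2$), and write $\overline{q_2}+2q_2$ and $q_2^2$ in terms of real and imaginary parts with respect to the basis $\{1,ic/\omega_*^\infty\}$. Multiplication by $\overline{q_2^{*\infty}}=-1-i\omega_*^\infty/c$ then turns $\text{Re}$ and $\text{Im}$ parts into polynomial expressions in $\lambda_*^\infty$ alone (after using $\omega_*^\infty/c=c/(\lambda_*^\infty\omega_*^\infty)$), and the final factor $(1-ic/\omega_*^\infty)/2$ mixes them in a predictable way. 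Collecting terms and reducing to a common denominator $\lambda_*^\infty(1+\lambda_*^\infty)^2$ (resp.\ $(\lambda_*^\infty)^2(1+\lambda_*^\infty)$) should yield precisely the stated expression. Since no analytic estimate is required beyond continuity of the rational map $\ell\mapsto(q_2,q_2^*,D,\lambda_*)$ already established in Lemma \ref{ll3}, the only real work is careful algebraic simplification.
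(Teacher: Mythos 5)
Your proposal is correct and follows essentially the same route as the paper's own proof: evaluate the $\cos^4(x/\ell)$ integrals via Eq. \eqref{ine} to get $A_2=\f{3}{4D}[\cdots]$, pass to the limit using Lemma \ref{ll3} together with the identities $1-\beta\la_*^\infty=c(1+\la_*^\infty)/\la_*^\infty$ and $\la_*^\infty(\omega_*^\infty)^2=c^2$ from Eq. \eqref{cb}, and then extract the real part by algebraic bookkeeping. Carrying out your plan (your observation $|q_2^\infty|^2=\la_*^\infty/(1+\la_*^\infty)$ is correct and does streamline the bracket) indeed yields ${\rm Re}(A_2^\infty)=-\f{3c(3\la_*^\infty+2)}{4(\la_*^\infty)^2(1+\la_*^\infty)^2}$, which coincides with the stated expression.
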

\begin{proof}
It follows from \eqref{Ai2} and \eqref{ine} that
\begin{equation}\label{above2}
\begin{split}
A_2
=&\ds\f{3}{4D}\left[-\f{6(1-\beta\la_{*})}{(1+\la_{*})^3}+\ds\f{2(1-\beta\la_{*})}{\la_{*}(1+\la_{*})^2}
\left(\overline q_2+2q_2\right)\right]\\
+&\ds\f{3}{4D}\left[-\f{4c}{\la_{*}^2}\overline {q_2^*}(\overline q_2+2q_2)+\f{6c}{\la_{*}^2}\overline {q_2^*}+\f{2c}{\la_{*}^2}\overline {q_2^*}(q_2^2+2q_2\overline q_2)\right].
\end{split}
\end{equation}
By virtue of Eq. \eqref{cb}, and taking the limits at both sides of Eq. \eqref{above2} as $\ell\to\infty$, we see that
\begin{equation*}
\begin{split}
{\rm Re}(A_2^\infty)
=&-\ds\f{3c}{4\la_*^\infty(1+\la_*^\infty)^2}-\ds\f{3c}{2(\la_*^\infty)^2}+\ds\f{3c}{2\la_*^\infty(1+\la_*^\infty)}\\
=&-\ds\f{3c}{4\la_*^\infty(1+\la_*^\infty)^2}-\ds\f{3c}{2\left(\la_*^\infty\right)^2(1+\la_*^\infty)}.
\end{split}
\end{equation*}
\end{proof}

Then we consider the estimate of $A_3$ as $\ell\to\infty$, which is more complex than that of $A_1$ and $A_2$.
\begin{lemma}\label{esta3}
Denote $A_3^\infty=\lim_{\ell\to\infty} A_3$, where $A_3$ is defined as in Eq. \eqref{Ai3}. Then
\begin{equation}\label{a3}
\begin{split}
{\rm Re}(A_3^\infty)
=&-\ds\f{9c}{4\left(\la_*^\infty\right)^2(1+\la_*^\infty)}\\
+&\ds\f{\beta c^2}{2(\la_*^\infty)^2(\la_*^\infty+1)}\left(\ds\f{\beta c\la_*^\infty(\la_*^\infty+4)-(\omega_*^\infty)^2(\la_*^\infty-2)}{\left(-3(\omega_*^\infty)^2+\beta c\la_*^\infty\right)^2+4\beta^2 (\la_*^\infty)^2(\omega_*^\infty)^2}\right).\\
\end{split}
\end{equation}
\end{lemma}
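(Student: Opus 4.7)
The plan is to mirror the strategy used for $A_1$ and $A_2$ (Lemmas \ref{esta1} and \ref{esta2}): substitute the explicit decomposition \eqref{w} of $w_{20}$ into the defining expression \eqref{Ai3}, evaluate every spatial integral by means of the identities \eqref{ine}, and then pass to the limit $\ell\to\infty$ with the help of Lemmas \ref{ll3}--\ref{333}.

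First, I would insert $w_{20}^{(1)} = a_1\cos(2x/\ell) + a_3$ and $w_{20}^{(2)} = a_2\cos(2x/\ell) + a_4$ into \eqref{Ai3}. Combined with the identity $\int_0^{\ell\pi}\cos(2x/\ell)\,dx = 0$ and \eqref{ine}, this eliminates the spatial dependence and produces an explicit algebraic expression for $D\cdot A_3$ as a linear combination of $a_1,a_2,a_3,a_4$ with coefficients built from $\la_*, q_2, q_2^*, \beta$ and $c$. No integral is then left to evaluate.

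Second, I would take $\ell\to\infty$, inserting $D^\infty, q_2^\infty, q_2^{*\infty}$ from Lemma \ref{ll3} and $a_i^\infty$ from Lemma \ref{333}, and then using the identities $1 - \beta\la_*^\infty = c(1+\la_*^\infty)/\la_*^\infty$ and $\la_*^\infty(\omega_*^\infty)^2 = c^2$ of \eqref{cb} and \eqref{rela} to simplify. The terms that involve only $a_1^\infty$ and $a_2^\infty$ are already rational in $\la_*^\infty, \omega_*^\infty, \beta, c$; after taking real parts and collecting, they collapse to the constant contribution $-\f{9c}{4(\la_*^\infty)^2(1+\la_*^\infty)}$ in \eqref{a3}.

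The hard part will be treating the contributions from $a_3^\infty$ and $a_4^\infty$, which, unlike $a_1^\infty$ and $a_2^\infty$, carry the additional complex factor $\f{-3(\omega_*^\infty)^2}{-3(\omega_*^\infty)^2 + \beta c\la_*^\infty + 2i\beta\la_*^\infty\omega_*^\infty}$. Rationalizing this denominator immediately introduces $(-3(\omega_*^\infty)^2 + \beta c\la_*^\infty)^2 + 4\beta^2(\la_*^\infty)^2(\omega_*^\infty)^2$, which is exactly the denominator appearing in \eqref{a3}. After careful bookkeeping of real and imaginary parts, and once more invoking $\la_*^\infty(\omega_*^\infty)^2 = c^2$ and $1 - \beta\la_*^\infty = c(1+\la_*^\infty)/\la_*^\infty$, the resulting numerator should collapse to the factor $\beta c\la_*^\infty(\la_*^\infty + 4) - (\omega_*^\infty)^2(\la_*^\infty - 2)$ of \eqref{a3}. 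The difficulty is thus purely one of algebraic bookkeeping; no new conceptual ingredient is needed beyond what was used for $A_1$ and $A_2$.
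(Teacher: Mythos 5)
Your overall route is the paper's own: substituting the decomposition \eqref{w} of $w_{20}$ into \eqref{Ai3} and using \eqref{ine} together with $\int_0^{\ell\pi}\cos\f{2x}{\ell}\,dx=0$ reduces $A_3$ to the finite algebraic combination of $a_1,\dots,a_4$ in the paper's Eq.~\eqref{above3}, and the limit is then taken with Lemmas \ref{ll3}--\ref{333}, with the complex factor carried by $a_3^\infty,a_4^\infty$ handled by rationalization. However, both of your predicted intermediate ``collapses'' are quantitatively false, so the term-by-term verification you propose would appear to fail even though the method is sound. In the paper's notation (Eqs.~\eqref{Bi}--\eqref{B14}), the terms involving only $a_1^\infty$ and $a_2^\infty$ are $B_1^\infty+B_4^\infty$, whose real part is $-\f{3c}{4\left(\la_*^\infty\right)^2(1+\la_*^\infty)}$ --- only one third of the constant $-\f{9c}{4\left(\la_*^\infty\right)^2(1+\la_*^\infty)}$ in \eqref{a3}, not all of it as you claim.

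The missing $-\f{3c}{2\left(\la_*^\infty\right)^2(1+\la_*^\infty)}$ is hidden in the $a_3^\infty,a_4^\infty$ contributions, which are \emph{not} purely proportional to $\sigma=\left(-3\left(\omega_*^\infty\right)^2+\beta c\la_*^\infty+2i\beta\la_*^\infty\omega_*^\infty\right)^{-1}$: since $-3\left(\omega_*^\infty\right)^2\sigma=1-\left(\beta c\la_*^\infty+2i\beta\la_*^\infty\omega_*^\infty\right)\sigma$, one has $a_3^\infty=a_1^\infty-\left(\beta c\la_*^\infty+2i\beta\la_*^\infty\omega_*^\infty\right)\sigma\, a_1^\infty$, and similarly for $a_4^\infty$ up to the $\gamma_2^\infty$ correction. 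Hence the $a_3,a_4$ block splits into a $\sigma$-free piece, which contributes $2\,{\rm Re}\!\left(B_1^\infty+B_4^\infty\right)=-\f{3c}{2\left(\la_*^\infty\right)^2(1+\la_*^\infty)}$, plus a genuinely $\sigma$-proportional remainder; this extraction is exactly the manipulation carried out in the paper's Eq.~\eqref{B235}. Consequently, after rationalizing, the $a_3,a_4$ contribution equals the second term of \eqref{a3} \emph{plus} that constant, not the second term alone as you assert. Your plan is repairable --- summing all terms blindly still yields the correct total, since your two misattributions cancel --- but as written, the checkpoints you would compute against (the $a_1,a_2$ terms giving $-\f{9c}{4\left(\la_*^\infty\right)^2(1+\la_*^\infty)}$, the rationalized $a_3,a_4$ terms giving exactly the displayed fraction) are both wrong, and the proof needs the splitting identity above to organize the algebra into the stated form.
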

\begin{proof}
It follows from Eqs. \eqref{Ai3} and \eqref{ine} that
\begin{equation}\label{above3}
\begin{split}
A_3=&\ds\f{c}{D\left(\la_{*}\right)^2}\left[\left(\ds\f{2\la_*}{1+\la_*}-\overline q_2\right)\ds\f{a_1}{2}-\ds\f{a_2}{2}\right]
+\ds\f{2c}{D\left(\la_{*}\right)^2}\left[\left(\ds\f{2\la_*}{1+\la_*}-\overline q_2\right)\ds\f{a_3}{2}-\ds\f{a_4}{2}\right]\\
-&\ds\f{\beta a_3}{D}-\f{c}{D\la_{*}}\overline {q_2^*}(1-\overline q_2)(a_1-a_2)-\f{2c}{D\la_{*}}\overline {q_2^*}(1-\overline q_2)(a_3-a_4).
\end{split}
\end{equation}
Taking the limits at both sides of Eq. \eqref{above3} as $\ell\to\infty$, we see that
$A_3^\infty=B_1^\infty+B_2^\infty+B_3^\infty+B_4^\infty+B_5^\infty$,
where
\begin{equation}\label{Bi}
\begin{split}
B_1^\infty=&\ds\f{c}{D^\infty\left(\la_{*}^\infty\right)^2}\left[\left(\ds\f{2\la_*^\infty}{1+\la_*^\infty}-\overline {q_2^\infty}\right)\ds\f{a_1^\infty}{2}-\ds\f{a_2^\infty}{2}\right],\\
B_2^\infty=&\ds\f{2c}{D^\infty\left(\la_{*}^\infty\right)^2}\left[\left(\ds\f{2\la_*^\infty}{1+\la_*^\infty}-\overline {q_2^\infty}\right)\ds\f{a_3^\infty}{2}-\ds\f{a_4^\infty}{2}\right],\\
B_3^\infty=&-\ds\f{\beta a_3^\infty}{D^\infty},\;\;B_4^\infty=-\f{c}{D^\infty\la_{*}^\infty}\overline {q_2^{*\infty}}(1-\overline{q_2^\infty})(a_1^\infty-a_2^\infty),\\
B_5^\infty=&-\f{2c}{D^\infty\la_{*}^\infty}\overline {q_2^{*\infty}}(1-\overline{q_2^\infty})(a_3^\infty-a_4^\infty).
\end{split}
\end{equation}
A direct calculation yields
\begin{equation}\label{B14}
\begin{split}
B_1^\infty
=&\ds\f{c}{12\left(\la_{*}^\infty\right)^2\left(1+\la_*^\infty\right)}-\ds\f{i\omega_*^\infty}{6\left(\la_{*}^\infty\right)^2\left(1+\la_*^\infty\right)},\\
B_4^\infty=&\ds\f{-5c}{6\left(\la_{*}^\infty\right)^2\left(1+\la_*^\infty\right)}-i\ds\f{\omega_*^\infty\left(2\la_*^\infty+1\right)}{6\left(\la_{*}^\infty\right)^2\left(1+\la_*^\infty\right)}.
\end{split}
\end{equation}
Denote $\sigma=\ds\f{1}{-3\left(\omega_*^\infty\right)^2+\beta c\la_*^\infty+2i\beta\la_*^\infty\omega_*^\infty}$.
Then
\begin{equation}\label{B235}
\begin{split}
&B_2^\infty+B_3^\infty+B_5^\infty\\
=&\ds\f{-6\left(\omega_*^\infty\right)^2}{\sigma}(B_1^\infty+B_4^\infty)-\ds\f{\beta c \gamma_2^\infty}{2\sigma D^\infty\la_*^\infty}+\ds\f{3\beta \left(\omega_*^\infty\right)^2 a_1^\infty}{\sigma D^\infty}+\ds\f{\beta c\gamma_2^\infty\overline {q_2^{*\infty}}}{\sigma D^\infty}(1-\overline{q_2^\infty})\\
=&\ds\f{\left(\omega_*^\infty\right)^2}{\sigma}\left[\ds\f{9c}{2\left(\la_*^\infty\right)^2(\la_*^\infty+1)}+i\ds\f{2\omega_*^\infty}{\left(\la_*^\infty\right)^2}\right]
+\ds\f{\beta c^2}{2\sigma \left(\la_*^\infty\right)^2(\la_*^\infty+1)}\left(1+i\ds\f{c}{\omega_*^\infty}\right)\\
+&\ds\f{\beta c^2}{2\sigma (\la_*^\infty)^2(\la_*^\infty+1)}\left[1-2\la_*^\infty-i\ds(\la_*^\infty+4)\f{c}{\omega_*^\infty}\right]
+\ds\f{\beta c^2}{\sigma (\la_*^\infty)^2(\la_*^\infty+1)}\left(\la_*^\infty-i\ds\f{c}{\omega_*^\infty}\right)\\
=&\ds\f{\left(\omega_*^\infty\right)^2}{\sigma}\left[\ds\f{9c}{2\left(\la_*^\infty\right)^2(\la_*^\infty+1)}
+i\ds\f{2\omega_*^\infty}{\left(\la_*^\infty\right)^2}\right]+\ds\f{\beta c^2}{2\sigma \left(\la_*^\infty\right)^2(\la_*^\infty+1)}\left[2-i\ds(\la_*^\infty+5)\f{c}{\omega_*^\infty}\right]\\
=&-\ds\f{3c}{2\left(\la_*^\infty\right)^2(\la_*^\infty+1)}
-i\ds\f{2\omega_*^\infty}{3\left(\la_*^\infty\right)^2}+\ds\f{\beta c^2}{2\sigma \left(\la_*^\infty\right)^2(\la_*^\infty+1)}\left[2-i\ds(\la_*^\infty+5)\f{c}{\omega_*^\infty}\right]\\
-&\ds\f{\beta c\la_*^\infty+2i\beta\la_*^\infty\omega_*^\infty}{\sigma}\left[-\ds\f{3c}{2\left(\la_*^\infty\right)^2(\la_*^\infty+1)}
-i\ds\f{2\omega_*^\infty}{3\left(\la_*^\infty\right)^2}\right]\\
=&-\ds\f{3c}{2\left(\la_*^\infty\right)^2(\la_*^\infty+1)}
-i\ds\f{2\omega_*^\infty}{3\left(\la_*^\infty\right)^2}\\
+&\ds\f{\beta c^2}{2\sigma \left(\la_*^\infty\right)^2(\la_*^\infty+1)}\left[\ds\f{\la_*^\infty-2}{3}+i\left(\f{\la_*^\infty+7}{3}\right)\f{c}{\omega_*^\infty}\right].
\end{split}
\end{equation}
Therefore, Eq. \eqref{a3} is derived.
\end{proof}
Summarizing $A_1$, $A_2$ and $A_3$, we can obtain the estimate of $g_{21}$ as $\ell\to\infty$.
\begin{theorem}\label{Th37}
Denote $g_{21}^\infty(\la_*^\infty)=\lim_{\ell\to\infty} g_{21}$, where $g_{21}$ is defined as in Eq. \eqref{g}. Then
\begin{equation}\label{g21}
\begin{split}
&{\rm Re}(g_{21}^\infty(\la_*^\infty))\\=&-\ds\f{3c}{4\la_*^\infty(1+\la_*^\infty)^2}-\ds\f{c}{4\left(\la_*^\infty\right)^2(1+\la_*^\infty)}+\ds\f{c\left[\left(\omega_*^\infty\right)^2-\beta c\la_*^\infty\right]}{\left(\la_*^\infty\right)^2(1+\la_*^\infty)\left[\left(\omega_*^\infty\right)^2+\beta c\la_*^\infty\right]}\\
+&\ds\f{\beta c^2}{2(\la_*^\infty)^2(\la_*^\infty+1)}\left(\ds\f{\beta c\la_*^\infty(\la_*^\infty+4)-(\omega_*^\infty)^2(\la_*^\infty-2)}{\left(-3(\omega_*^\infty)^2+\beta c\la_*^\infty\right)^2+4\beta^2 (\la_*^\infty)^2(\omega_*^\infty)^2}\right),\\
\end{split}
\end{equation}
where $\la_*=\la_{1,+}^H\;\text{or}\;\la_{1,-}^H$.
\end{theorem}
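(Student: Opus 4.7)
The plan is to exploit the additive decomposition $g_{21}=A_1+A_2+A_3$ established in Eqs. \eqref{Ai1}--\eqref{Ai3} together with the remark immediately preceding Lemma \ref{esta1}. Since taking the limit $\ell\to\infty$ and then the real part are both $\mathbb{R}$-linear operations, and since Lemmas \ref{esta1}, \ref{esta2}, \ref{esta3} already secure the existence of $A_1^\infty$, $A_2^\infty$, $A_3^\infty$ as finite complex numbers (the denominators $(\omega_*^\infty)^2+\beta c\la_*^\infty$ and $(-3(\omega_*^\infty)^2+\beta c\la_*^\infty)^2+4\beta^2(\la_*^\infty)^2(\omega_*^\infty)^2$ are manifestly positive), we obtain
\[
{\rm Re}(g_{21}^\infty(\la_*^\infty))={\rm Re}(A_1^\infty)+{\rm Re}(A_2^\infty)+{\rm Re}(A_3^\infty).
\]

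Next I would substitute the three formulas supplied by Lemmas \ref{esta1}--\ref{esta3} and collect like terms. The $A_2^\infty$ contribution alone carries the summand $-\tfrac{3c}{4\la_*^\infty(1+\la_*^\infty)^2}$, which reproduces the first term in the statement. The remaining ``simple'' rational summands of the form $\kappa\cdot c/[(\la_*^\infty)^2(1+\la_*^\infty)]$ come with coefficients $\kappa=\tfrac{7}{2}$ from $A_1^\infty$, $\kappa=-\tfrac{3}{2}$ from $A_2^\infty$, and $\kappa=-\tfrac{9}{4}$ from $A_3^\infty$; their sum is
\[
\tfrac{7}{2}-\tfrac{3}{2}-\tfrac{9}{4}=-\tfrac{1}{4},
\]
which yields the second term $-\tfrac{c}{4(\la_*^\infty)^2(1+\la_*^\infty)}$ of \eqref{g21}. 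Finally, the two fractional summands—the one carrying the factor $(\omega_*^\infty)^2-\beta c\la_*^\infty$ from Lemma \ref{esta1} and the one carrying $\beta c\la_*^\infty(\la_*^\infty+4)-(\omega_*^\infty)^2(\la_*^\infty-2)$ from Lemma \ref{esta3}—pass through unchanged to appear as the third and fourth summands in \eqref{g21}, since there is no further cancellation between them (their denominators are different).

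The main obstacle here is purely bookkeeping: the hard analytic work—propagating the limits through the chain $\la_*\to\la_*^\infty$, $\omega_*\to\omega_*^\infty$, $q_2\to q_2^\infty$, $q_2^*\to q_2^{*\infty}$, $D\to D^\infty$, then through $\gamma_i\to\gamma_i^\infty$ and $a_i,b_i\to a_i^\infty,b_i^\infty$, and finally into $A_1,A_2,A_3$—has all been completed in Lemmas \ref{ll3}--\ref{esta3}, repeatedly using the identities $c(1+\la_*^\infty)=\la_*^\infty(1-\beta\la_*^\infty)$ and $\la_*^\infty(\omega_*^\infty)^2=c^2$ of Eq. \eqref{cb}. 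Hence the proof reduces to the arithmetic combination described above, and no new estimate or cancellation is required.
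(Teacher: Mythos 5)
Your proposal is correct and coincides with the paper's own (implicit) proof: the paper likewise sets $g_{21}=A_1+A_2+A_3$ via Eqs.~\eqref{Ai1}--\eqref{Ai3} and obtains Theorem~\ref{Th37} by simply adding the real parts computed in Lemmas~\ref{esta1}--\ref{esta3}. Your bookkeeping $\tfrac{7}{2}-\tfrac{3}{2}-\tfrac{9}{4}=-\tfrac{1}{4}$ and the observation that the two non-elementary fractions pass through without cancellation are exactly the arithmetic the paper leaves to the reader.
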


Note that $\mu'(\la_{1,+}^H)<0$ and $\mu'(\la_{1,-}^H)>0$.
We obtain the stability on the bifurcating spatially nonhomogeneous periodic solutions for large spatial scale $\ell$.
\begin{theorem}\label{Th38}
Assume that $d_1/d_2>p_1(\la_1)$ and $c<p_2(\la_2)$. Then, for sufficiently large $\ell$, $(\la,\la)$ is locally asymptotically stable for $\la\in\left(\la_{1,+}^H,1/\beta\right)\cup\left(0,\la_{1,-}^H\right)$ and unstable for $\la\in\left(\la_{1,-}^H,\la_{1,+}^H\right)$, and system \eqref{nonlocal3} undergoes Hopf bifurcation at $(\la,\la)$ when $\la=\la_{1,+}^H$ or $\la=\la_{1,-}^H$, where $\la_{1,-}^H$ and $\la_{1,+}^H$ are defined as in Eq. \eqref{Hopp}. Moreover,
\begin{enumerate}
\item[(1)] if ${\rm Re}(g_{21}^\infty(\la_*^\infty))>0$, where $\la_*=\la_{1,+}^H$ (respectively, $\la_*=\la_{1,-}^H$), and ${\rm Re}(g_{21}^\infty(\la_*^\infty))$ is defined as in Eq. \eqref{g21}, then the bifurcating spatially nonhomogeneous periodic solutions from $\la_{1,+}^H$ (respectively, $\la_{1,-}^H$) are unstable and exist
in the right neighborhood of $\la_{1,+}^H$ (respectively, in the left neighborhood of $\la_{1,-}^H$);
\item[(2)] if ${\rm Re}(g_{21}^\infty(\la_*^\infty))<0$, where $\la_*=\la_{1,+}^H$ (respectively, $\la_*=\la_{1,-}^H$), then the bifurcating spatially nonhomogeneous periodic solutions from $\la_{1,+}^H$ (respectively, $\la_{1,-}^H$) are orbitally asymptotically stable and exist in the left neighborhood of $\la_{1,+}^H$ (respectively, in the right neighborhood of $\la_{1,-}^H$).
\end{enumerate}
\end{theorem}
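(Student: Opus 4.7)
The plan is to combine the stability and Hopf existence results of Section 2 with the normal form machinery developed earlier in this section. The assertion about local asymptotic stability of $(\la,\la)$ on $(0,\la_{1,-}^H)\cup(\la_{1,+}^H,1/\beta)$, instability on $(\la_{1,-}^H,\la_{1,+}^H)$, and the existence of a Hopf bifurcation at each $\la_{1,\pm}^H$ with spatially nonhomogeneous bifurcating orbits is simply the large-$\ell$ branch of Theorems \ref{Hopfm}--\ref{Hopfm3} under the hypotheses $d_1/d_2>p_1(\la_1)$ and $c<p_2(\la_2)$; this half requires no new argument.

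For the direction and stability of the bifurcating orbits I apply the formulas derived above,
\begin{equation*}
\mu_2=-\f{{\rm Re}(C_1(0))}{{\rm Re}(\mu'(\la_*))}=-\f{{\rm Re}(g_{21})}{2{\rm Re}(\mu'(\la_*))},\qquad \beta_2={\rm Re}(g_{21}),
\end{equation*}
which reduce everything to the sign of ${\rm Re}(g_{21})$ at $\la_*=\la_{1,\pm}^H$. By Theorem \ref{Th37}, for sufficiently large $\ell$ the quantity ${\rm Re}(g_{21})$ is close to its limit ${\rm Re}(g_{21}^\infty(\la_*^\infty))$, so under the nonvanishing hypothesis of cases (1) and (2) the two carry the same sign whenever $\ell$ is large enough.

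Next I combine this sign information with the transversality data. The simple eigenvalue crossing the imaginary axis is $\alpha(\la)+i\omega(\la)$ with $\alpha(\la)=T_1(\la)/2$, so $\alpha'(\la_{1,\pm}^H)=p_2'(\la_{1,\pm}^H)/2$, and Lemma \ref{5.1l}(II) gives $\mu'(\la_{1,+}^H)<0$, $\mu'(\la_{1,-}^H)>0$. Substituting into the formula for $\mu_2$: at $\la_*=\la_{1,+}^H$ the denominator is negative, so $\mu_2$ carries the same sign as ${\rm Re}(g_{21}^\infty)$ (the branch extends to the right when this sign is positive, to the left when it is negative); at $\la_*=\la_{1,-}^H$ the denominator is positive, so $\mu_2$ carries the opposite sign and the branch direction flips accordingly. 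Since $\beta_2={\rm Re}(g_{21}^\infty)$, the orbital-stability criterion ($\beta_2<0\Leftrightarrow$ orbitally asymptotically stable) is identical at both bifurcation points. Reading off the four sign combinations reproduces assertions (1) and (2) of the theorem.

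The principal technical point is the passage to the limit and the propagation of the sign of $g_{21}$ from $\ell=\infty$ to finite but large $\ell$. This rests on the continuity, in the parameter $1/\ell$, of the quantities $\la_*,\omega_*,q_2,q_2^*,D,\gamma_i,a_i,b_i$ established in Lemmas \ref{ll3}--\ref{333}, which renders $g_{21}$ a continuous function of $1/\ell$ in a neighborhood of $0$; given the standing hypothesis ${\rm Re}(g_{21}^\infty(\la_*^\infty))\ne 0$ built into (1) and (2), sign preservation for all sufficiently small $1/\ell$ is immediate. The remainder of the proof is mechanical bookkeeping of the four sign combinations.
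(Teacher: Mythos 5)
Your proposal is correct and follows essentially the same route as the paper, which likewise derives Theorem \ref{Th38} by combining the large-$\ell$ case of Theorems \ref{Hopfm}--\ref{Hopfm3} with the normal form quantities $\mu_2=-{\rm Re}(C_1(0))/{\rm Re}(\mu'(\la_*))$ and $\beta_2={\rm Re}(g_{21})$, the transversality signs $\mu'(\la_{1,+}^H)<0$, $\mu'(\la_{1,-}^H)>0$, and the sign of ${\rm Re}(g_{21}^\infty)$ from Theorem \ref{Th37} propagated to large finite $\ell$ by continuity in $1/\ell$. Your four-way sign bookkeeping matches the stated conclusions exactly, so nothing further is needed.
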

In the following, we
give some numerical simulations to illustrate the obtained theoretical
results.

\begin{example}\label{exam1} To visualize the results in Theorems \ref{Hopfm} and \ref{Th38} , we choose
\begin{equation}\label{p1}
d_1=0.8,~d_2=1,~\beta=1.5,~c=0.1.
\end{equation}
Then system \eqref{nonlocal3} has a unique constant positive equilibrium $(\la,\la)$ if and only if $\la\in(0,2/3)$.
It follows from Theorem \ref{Hopfm} that, for sufficiently large $\ell$, there exist two Hopf bifurcation points $\la_{1,-}^H$ and $\la_{1,+}^H$ such that $(\la,\la)$ is locally asymptotically stable for $\la\in\left(0,\la_{1,-}^H\right)\cup\left(\la_{1,+}^H,2/3\right)$ and unstable for $\la\in\left(\la_{1,-}^H,\la_{1,+}^H\right)$.
Moreover, the bifurcating periodic solutions are spatially nonhomogeneous near these two Hopf bifurcation points $\la_{1,-}^H$ and $\la_{1,+}^H$. Note that $\la$
is equivalent to parameter $b$, where $$(1-\beta\la)(1+\la)=b\la,$$
and consequently, $b$ is strictly decreasing with respect to $\la$. Then there exist two Hopf bifurcation points $b_{1,\pm}^H$, which satisfy
\begin{equation}
b_{1,\pm}^H=\ds\f{(1-\beta\la_{1,\pm}^H)(1+\la_{1,\pm}^H)}{\la_{1,\pm}^H},
\end{equation}
such that the positive constant equilibrium of system \eqref{nonlocal3} is locally asymptotically stable for $b\in\left(0,b_{1,+}^H\right)\cup\left(b_{1,-}^H,\infty\right)$ and unstable for $b\in\left(b_{1,+}^H,b_{1,-}^H\right)$.

By virtue of Lemma \ref{3.21} and Theorem \ref{Th37}, we can easily calculate
\begin{equation*}
\lim_{\ell\to\infty}\la_{1,+}^H\approx0.4528,\;\;\lim_{\ell\to\infty}\la_{1,-}^H\approx0.1472,\;\;
\lim_{\ell\to\infty}b_{1,+}^H\approx1.0296,\;\;\lim_{\ell\to\infty}b_{1,-}^H\approx6.0704,
\end{equation*}
$\lim_{\ell\to\infty}g_{21}\approx -0.1254<0$  for Hopf bifurcation point $\la_{1,+}^H$,
and $\lim_{\ell\to\infty}g_{21}\approx 2.0724>0$ for Hopf bifurcation point $\la_{1,-}^H$.
Then, it follows from Theorem \ref{Th38} that, for sufficiently large $\ell$,
\begin{enumerate}
\item[(1)] the bifurcating spatially nonhomogeneous periodic solutions from $b_{1,+}^H$ are orbitally asymptotically stable and exist
in the right neighborhood of $b_{1,+}^H$;
\item[(2)] the bifurcating spatially nonhomogeneous periodic solutions from $b_{1,-}^H$ are unstable and exist
in the right neighborhood of $b_{1,-}^H$.
\end{enumerate}
Numerically, we show that the solution converges to the stable spatially nonhomogeneous periodic solution, which bifurcats from $b_{1,+}^H$, and the periodic solution concentrates more on the boundary of the domain when spatial scale $\ell$ increases, see Fig. \ref{hopf1}.
\begin{figure}[htbp]
\centering\includegraphics[width=0.5\textwidth]{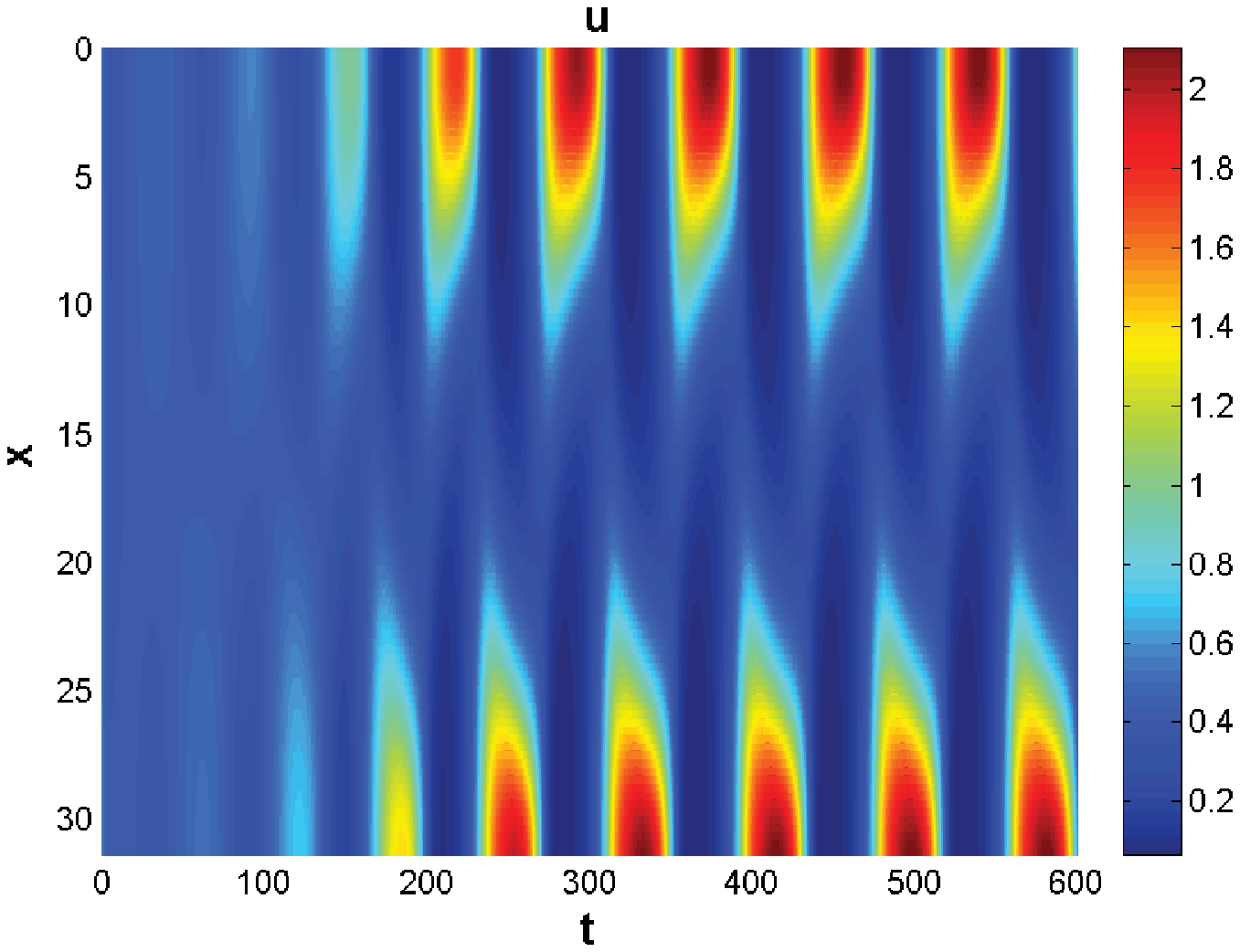}\includegraphics[width=0.5\textwidth]{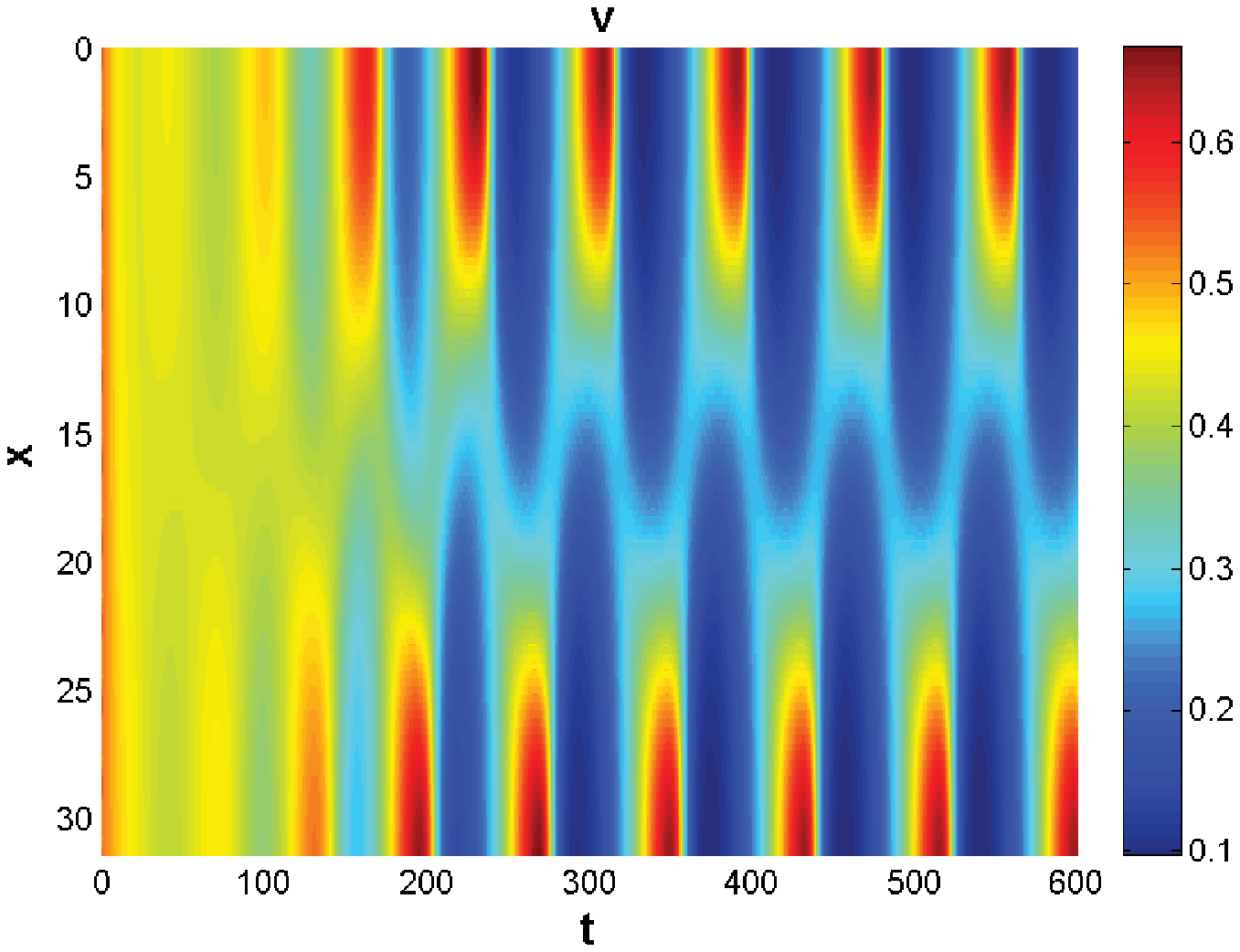}\\
\centering\includegraphics[width=0.5\textwidth]{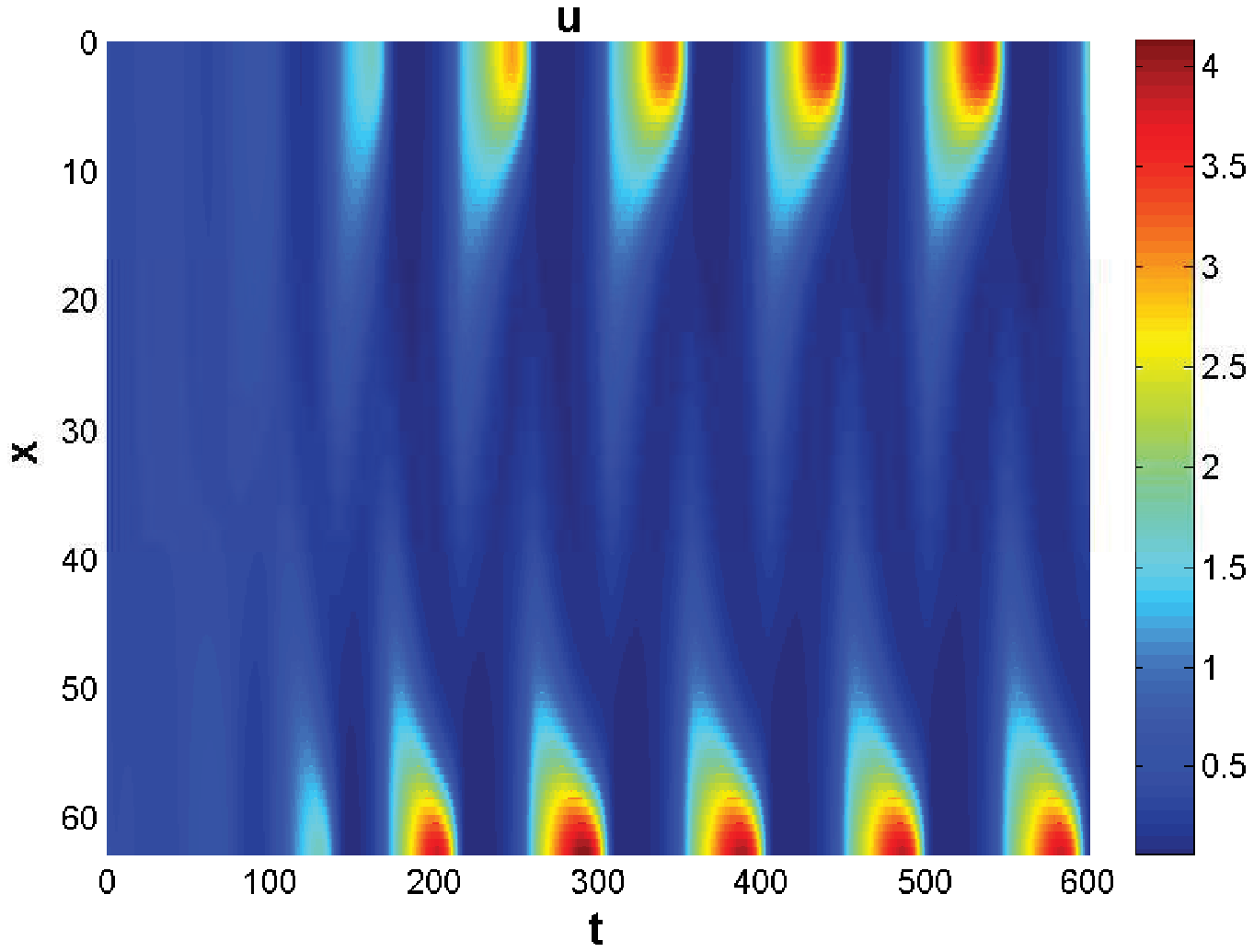}\includegraphics[width=0.5\textwidth]{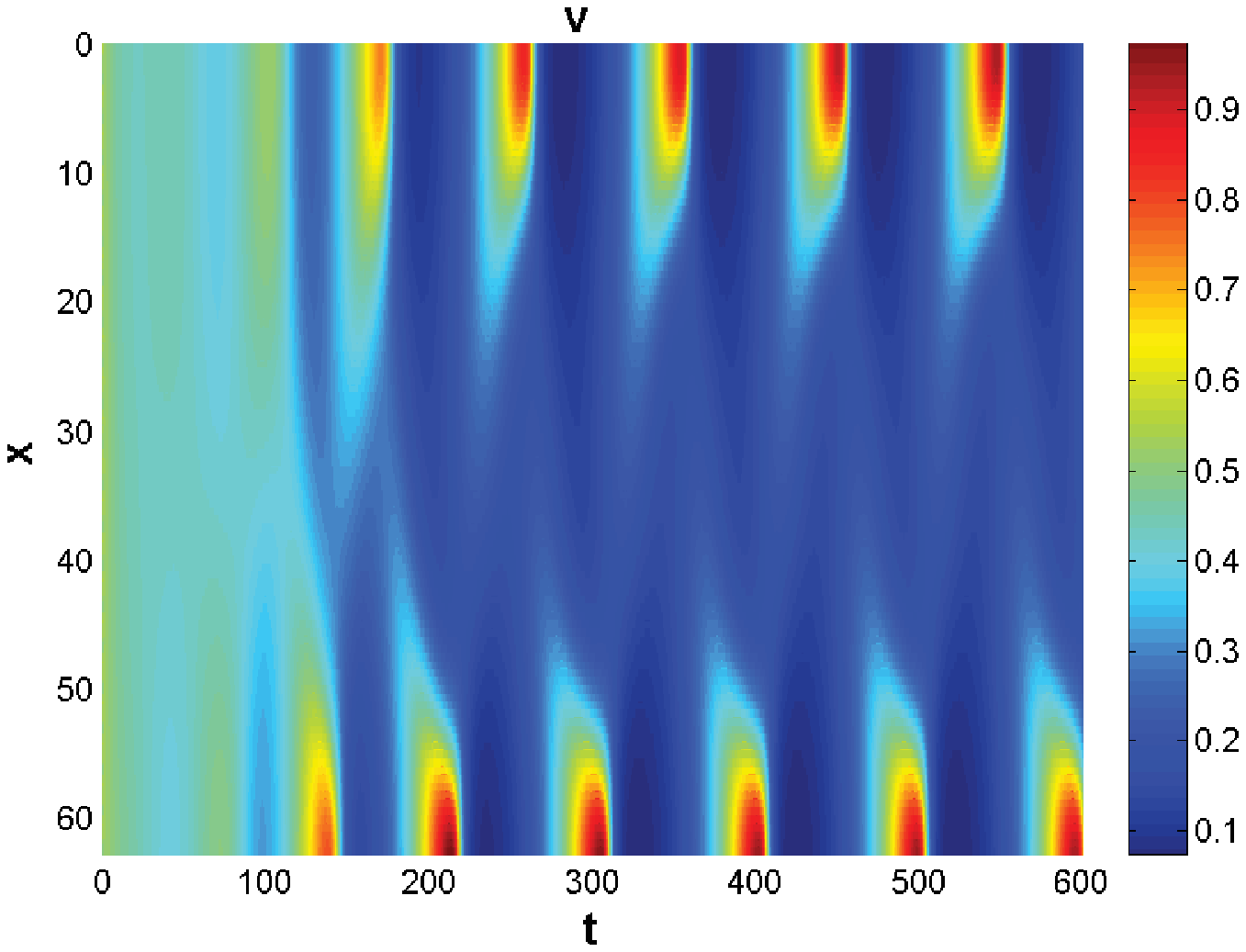}
 \caption{The solution converges to the bifurcating spatially nonhomogeneous periodic solution from $b_{1,+}^H$. Here $b=1.2$, and initial values:  $u(x,0)=0.5+\ds\f{0.05x^2}{\ell^2\pi^2},~v(x,0)=0.5+0.05\cos^2x$. (Upper): $\ell=10$;
(Lower): $\ell=20$.\label{hopf1}}
\end{figure}

\end{example}

\begin{example}\label{exam2} To visualize the results in Theorems \ref{Hopfm3} and \ref{Th38}, we choose
\begin{equation}\label{p2}
d_1=0.4,~d_2=0.6,~\beta=0.2,~c=0.2.
\end{equation}
Similarly, it follows from Theorem \ref{Hopfm3} that, for sufficiently large $\ell$, there exist two Hopf bifurcation points $\la_{1,-}^H$ and $\la_{1,+}^H$ such that $(\la,\la)$ is locally asymptotically stable for $\la\in\left(0,\la_{1,-}^H\right)\cup\left(\la_{1,+}^H,5\right)$ and unstable for $\la\in\left(\la_{1,-}^H,\la_{1,+}^H\right)$.
Moreover, the bifurcating periodic solutions are spatially nonhomogeneous near these two Hopf bifurcation points $\la_{1,-}^H$ and $\la_{1,+}^H$. Then there exist two Hopf bifurcation points $b_{1,\pm}^H$
such that the positive constant equilibrium of system \eqref{nonlocal3} is locally asymptotically stable for $b\in\left(0,b_{1,+}^H\right)\cup\left(b_{1,-}^H,\infty\right)$ and unstable for $b\in\left(b_{1,+}^H,b_{1,-}^H\right)$.

Similarly, by virtue of Lemma \ref{3.21} and Theorem \ref{Th37}, we can also easily compute
\begin{equation*}
\lim_{\ell\to\infty}\la_{1,+}^H\approx 3.7321,\;\;\lim_{\ell\to\infty}\la_{1,-}^H\approx 0.2679,\;\;
\lim_{\ell\to\infty}b_{1,+}^H\approx  0.3215,\;\;\lim_{\ell\to\infty}b_{1,-}^H\approx 4.4785,
\end{equation*}
$\lim_{\ell\to\infty}g_{21}\approx -0.0033<0$  for Hopf bifurcation point $\la_{1,+}^H$, and
$\lim_{\ell\to\infty}g_{21}\approx  1.0745>0$ for Hopf bifurcation point $\la_{1,-}^H$.
Then, it follows from Theorem \ref{Th38} that, for sufficiently large $\ell$,
\begin{enumerate}
\item[(1)] the bifurcating spatially nonhomogeneous periodic solutions from $b_{1,+}^H$ are orbitally asymptotically stable and exist
in the right neighborhood of $b_{1,+}^H$;
\item[(2)] the bifurcating spatially nonhomogeneous periodic solutions from $b_{1,-}^H$ are unstable and exist
in the right neighborhood of $b_{1,-}^H$.
\end{enumerate}
Similarly as in Example \ref{exam1}, we also numerically show that the solution converges to the stable spatially nonhomogeneous periodic solution bifurcating from $b_{1,+}^H$, and the periodic solution concentrates more on the boundary of the domain when spatial scale $\ell$ increases, see Fig. \ref{hopf2}.
\begin{figure}[htbp]
\centering\includegraphics[width=0.5\textwidth]{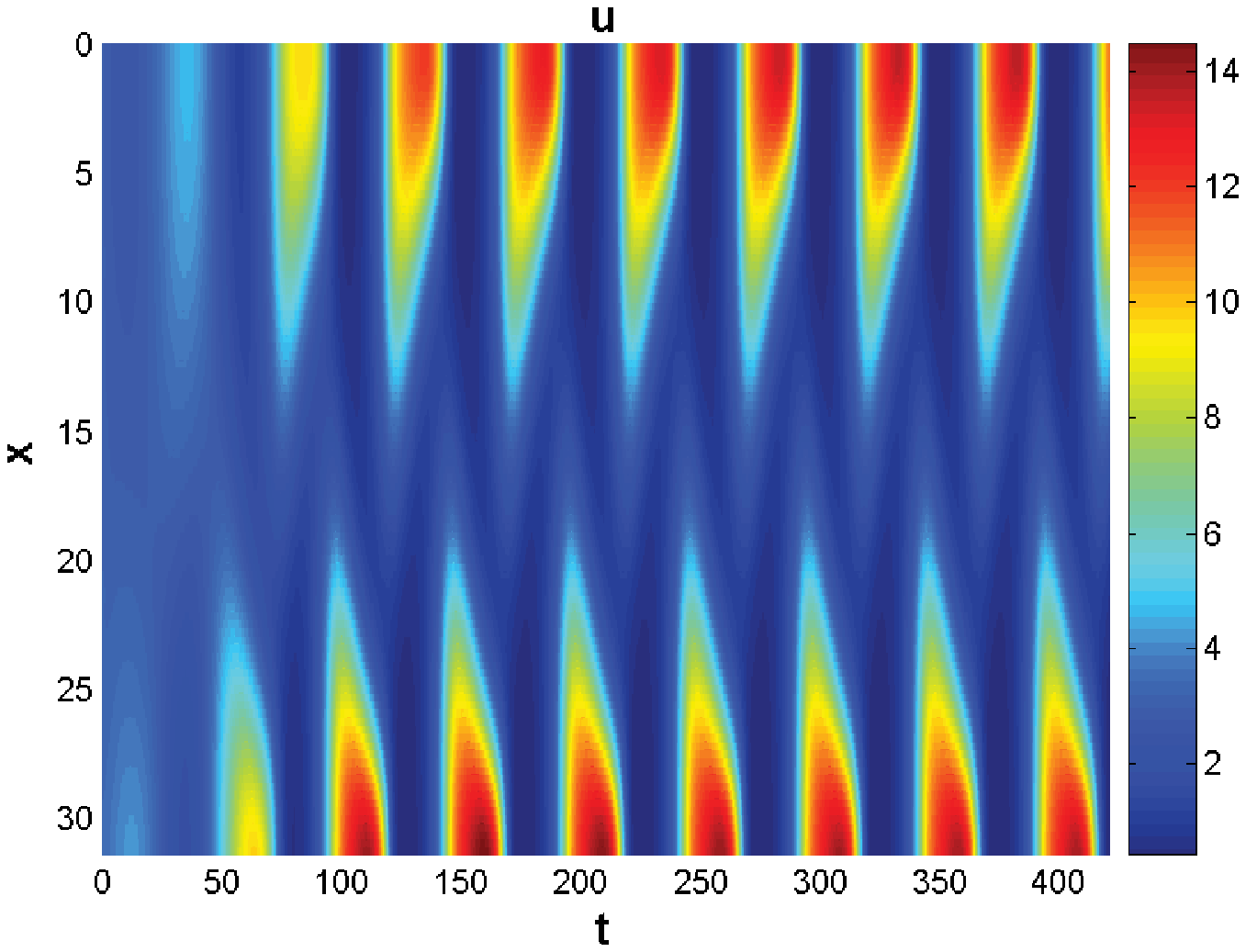}\includegraphics[width=0.5\textwidth]{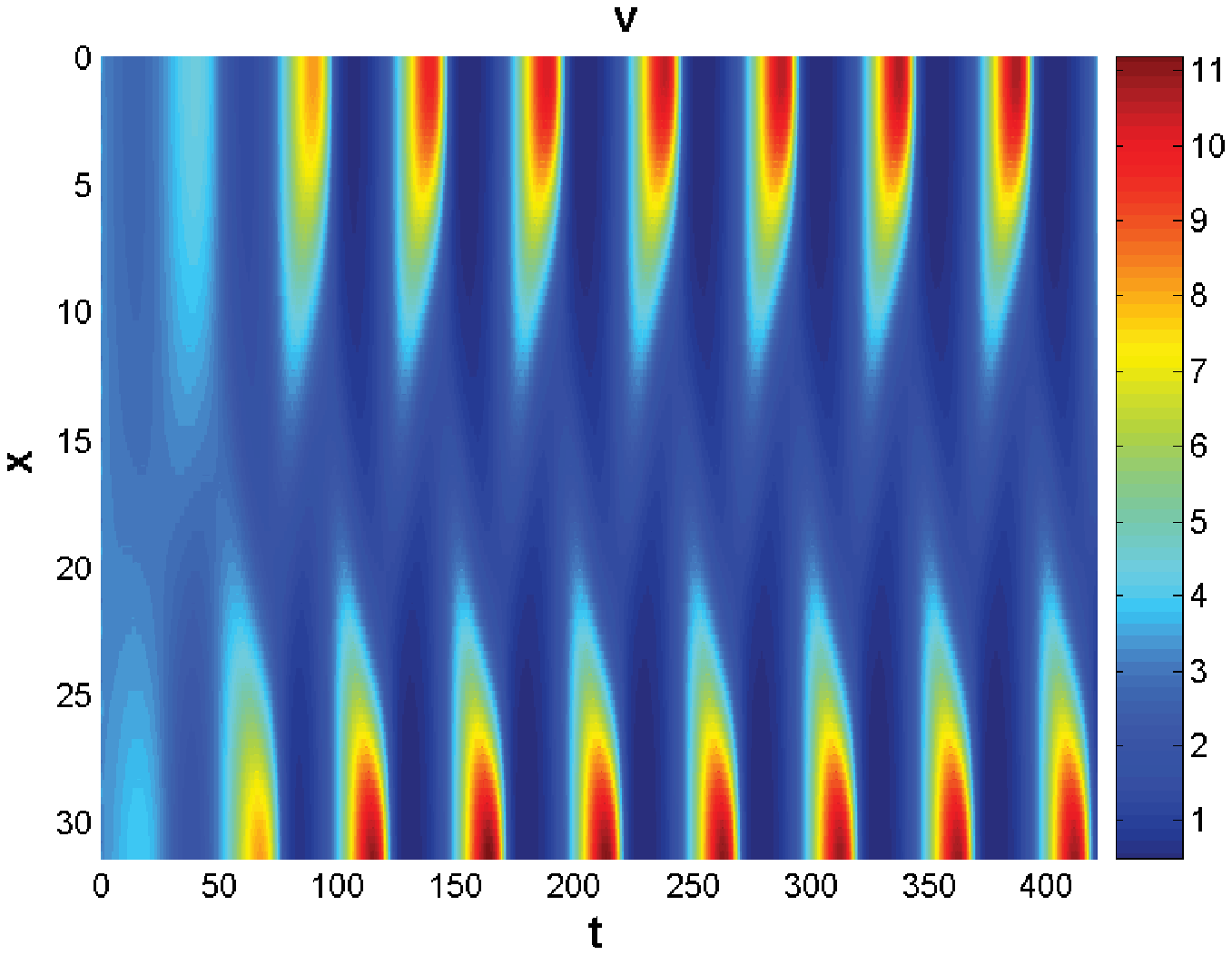}\\
\centering\includegraphics[width=0.5\textwidth]{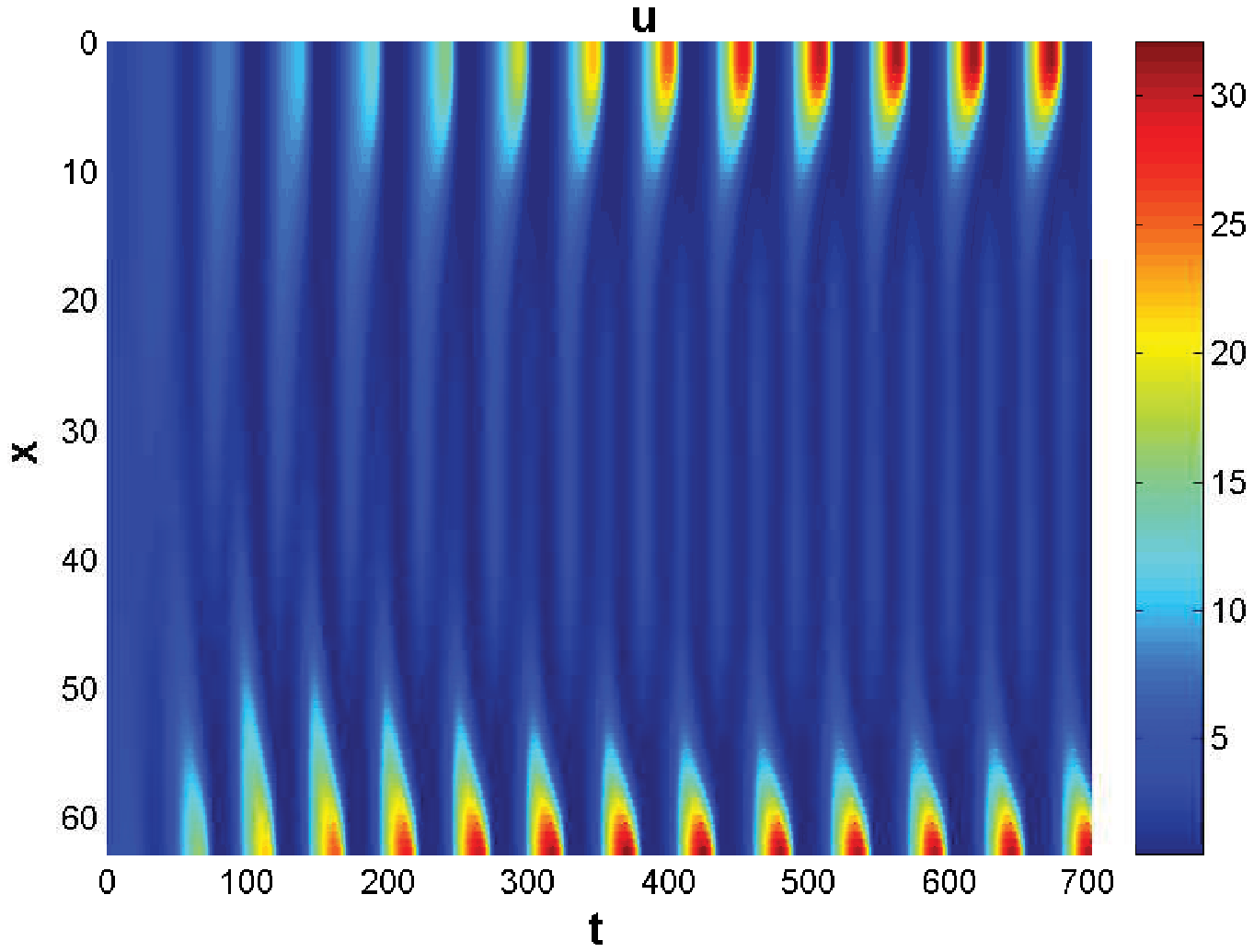}\includegraphics[width=0.5\textwidth]{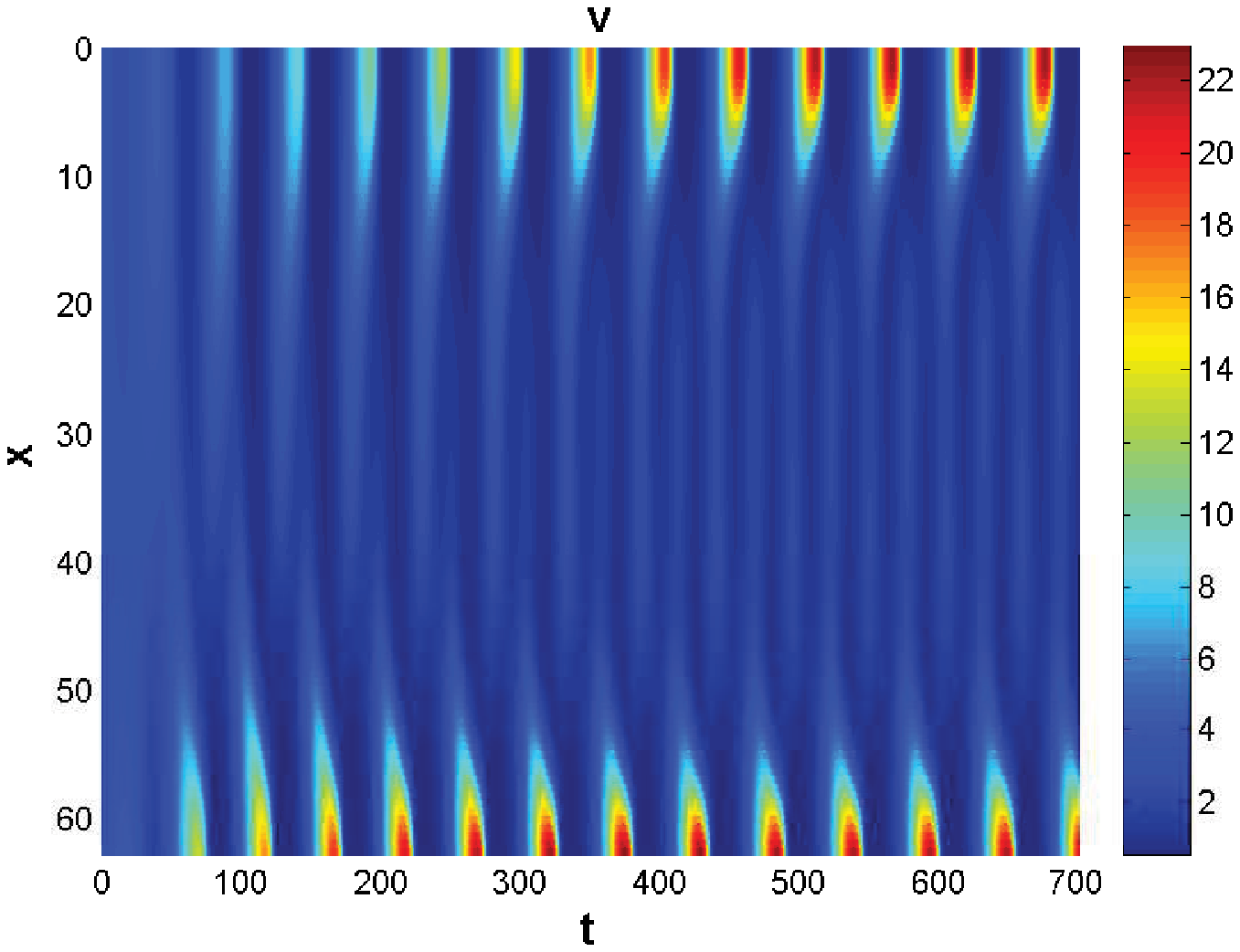}
 \caption{The solution converges to the bifurcating spatially nonhomogeneous periodic solution from $b_{1,+}^H$. Here $b=0.5$, and initial values:  $u(x,0)=3+\ds\f{0.5x^2}{\ell^2\pi^2},~v(x,0)=3+0.5\cos^2x$. (Upper): $\ell=10$;
(Lower): $\ell=20$.\label{hopf2}}
\end{figure}

\end{example}


\end{document}